\numberwithin{equation}{section}
\newtheorem{proposition}{Proposition}[section]
\newtheorem{theorem}[proposition]{Theorem}
\theoremstyle{definition}
\newtheorem{remark}[proposition]{Remark}
\numberwithin{equation}{section}
\begin{document}
\title{Grain Boundary Grooving in a Bicrystal with Passivation Coating}

\author{H.~Kalantarova$^\ast$,
        L.~Klinger$^\ast$ and  E.~Rabkin$^\ast$}
\address{$^\ast$Department of Materials Science and Engineering, Technion-Israel Institute of Technology, 3200003 Haifa, Israel}

\keywords{surface diffusion, parabolic differential equation, singular perturbation,  boundary layer theory, elastic thin films, grain boundary}

\date{\today}

\maketitle

\begin{abstract}
We use the sixth order linear parabolic equation 
\begin{equation}
\frac{\partial y}{\partial t}=B\left(\alpha\frac{\partial^{6}y}{\partial x^{6}}-\frac{\partial^{4}y}{\partial x^{4}}\right),\ x\in \mathbb{R}_{+},\ t>0,\nonumber
\end{equation}
proposed by Rabkin and describing the evolution of a solid surface covered with a thin, inert and fully elastic passivation layer, to analyze the grain boundary groove formation on initially flat surface. We derive the corresponding boundary conditions and construct an asymptotic representation of the solution to this initial boundary value problem when $\alpha$ is small, by applying the theory of singular perturbation. We illustrate the effect of passivation film near and far from a grain boundary groove.
\end{abstract}

\maketitle

\section{Introduction}\label{sec:intr}
Grain boundaries (GBs) are planar defects in polycrystalline solids that separate grains with different crystallographic orientations. Their thermodynamic properties are described by the GB energy, $\gamma_{gb}$, defined as the work required to reversibly increase the GB area by $1 m^{2}$. Wherever the GB emerges at the free surface of the polycrystal, the characteristic GB groove (sharp valley) is formed, provided the temperature is high enough for the atoms of the solid to migrate by diffusion. The thermodynamic driving force for the GB grooving is the reduction of the GB area and, as a consequence, of the total energy of all surfaces and GBs in the system. The thermal GB grooving has been widely employed in Physical Metallurgy for visualization of microstructure via thermal etching \cite{Shuttleworth1946}, \cite{Chinn2002}, \cite{Hackl2017}. Several mechanisms of the thermal GB grooving have been proposed, and it is generally accepted that for small lateral dimensions of the grooves (typically, smaller than $10 \mu m$) the surface diffusion dominates \cite{Mullins1959}. The theory of the GB grooving  controlled by surface self-diffusion has been developed by Mullins \cite{Mullins1957}, and since his seminal work it has been widely employed for determining the ratio of the GB and surface energies \cite{Amouyal2005}, and surface self-diffusion coefficients \cite{Bonzel1990}. 

It is generally believed that depositing a thin refractory passivation layer on the surface of metals prevents the GB grooving because such a layer does not allow the surface mass transport of metal atoms by surface diffusion. Thin refractory (ceramic) passivation layers are also naturally forming on the surface of many metals due to oxidation in ambient air (i.e. on Al, Ta, Nb, Cr and Ti). For example, it was reported that surface topography of thin Al films reflects their microstructure during film deposition in the oxygen-free environment. Once the film becomes exposed to oxygen and the thin surface layer of alumina is formed, the surface topography of the film becomes “frozen”. Subsequent heat treatments result in grain growth in the film, yet the surface topography does not change and does not correspond anymore to the actual microstructure of the film \cite{Nik2016}. However, recent studies have demonstrated that metal self-diffusion along the metal-ceramic interfaces may be much faster than bulk self-diffusion, and comparable with other types of short-circuit diffusion in solids \cite{Kumar2018}, \cite{Barda2019}. Also, recent solid state dewetting experiments with thin Al films deposited on sapphire substrate indicated that Al self-diffusion along the interface between Al and its native oxide may be fast enough to enable significant morphology evolution of the film \cite{Hieke2017}. An indirect evidence for intensive metal self-diffusion along the interface between the metal and thin passivation ceramic layer was obtained during the studies of growth of Au nanowhiskers from the thin Au films with alumina coating produced by atomic layer deposition \cite{Kosinova2018}, and during the studies of the interface roughness evolution in thin Cu films encapsulated in SiO$_2$ \cite{Warren2012}. Thus, these recent works indicate that under certain circumstances the GB grooving at the metal surface with a thin passivation layer is possible, since the metal atoms can diffuse along the metal-passivation (ceramic) interface. However, with an absence of any atom mobility on the side of refractory coating, any shape change on the side of the metal should inevitably result in elastic bending of the coating. Such bending modifies the chemical potential of mobile metal atoms at the metal-coating interface. For example, infinitely stiff refractory coating should suppress any surface morphology evolution even in the case when metal atoms can move along the metal-coating interface. Rabkin has considered the underlying physics and derived the sixth-order partial differential equation describing topography evolution of the passivated surface in the small-slope approximation \cite{Rabkin2012}. He then applied this equation to describe the flattening of sinusoidal surface perturbation by interface diffusion. In the present work, we will employ Rabkin’s equation 
\begin{equation}\label{a1}
\frac{\partial y}{\partial t}=B\left(\alpha\frac{\partial^{6}y}{\partial x^{6}}-\frac{\partial^{4}y}{\partial x^{4}}\right),\quad x\in \mathbb{R}_{+},\quad t>0,
\end{equation} 
to describe the process of GB grooving in a bicrystal with refractory surface passivation layer together with the initial condition 
\begin{equation}\label{a0}
y(x,0)\equiv 0,
\end{equation}
and the boundary conditions, which are derived in Section \ref{sec:model},
\begin{eqnarray}
&&\label{a2}
\frac{\partial y}{\partial x}(0,t)-\alpha\frac{\partial^{3}y}{\partial x^{3}}(0,t)=\frac{\gamma_{gb}}{2(\gamma_{i}+\gamma_{s})},\\
&&\label{a3}
\frac{\partial^{3}y}{\partial x^{3}}(0,t)-\alpha\frac{\partial^{5}y}{\partial x^{5}}(0,t)=0,\\
&&\label{a5}\frac{\partial^{2}y}{\partial x^{2}}(0,t)=0,\\
&&\label{a4}
\lim_{x\rightarrow\infty}\frac{\partial^{i}y}{\partial x^{i}}(x,t)=0,\quad i=1,2,3,
\end{eqnarray}
where $B$ is the Mullins' coefficient, $\alpha$ is a small parameter, $\gamma_{gb}$ is the GB energy, $\gamma_{i}$ is the interface energy and $\gamma_{s}$ is the surface stress of the coating.  The fact that the surface stress, rather than surface energy is a relevant parameter for the outer surface of the coating is related to the fact that we allow only elastic deformation of the coating and, therefore, its surface structure changes with the deformation. The situation with the interface is different, since atoms migration on the metal side enables some degree of relaxation upon the coating deformation. For the sake of simplicity we accepted the interface energy as a relevant parameter. Surface energy and stress of a solid are, generally, different from each other \cite{Frolov2009}. They coincide in the case of liquid surfaces. The Mullins' coefficient $B$ and the parameter $\alpha$ are defined as
\begin{equation}
B=\frac{D_{i}n\Omega^{2}(\gamma_{i}+\gamma_{s})}{kT},\quad \alpha=\frac{E h^{3}}{12(1-\nu^{2})(\gamma_{i}+\gamma_{s})},\nonumber
\end{equation} 
where $D_{i}$ is the interface diffusion coefficient of the metal, $n$ is the number of mobile atoms per $m^{2}$ of the interface, $\Omega$ is the atomic volume of the metal atoms, $k$ is the Boltzmann constant, $T$ is the temperature, $E$ is the Young's modulus of the coating, $h$ is its thickness and $\nu$ is the Poisson's ratio.  A schematic of a passivated bicrystal with a GB and a corresponding groove is shown in Fig.~\ref{fig:GB_groove}. 
\begin{figure}[h!]
  \centering
  \includegraphics[width=.57\textwidth]{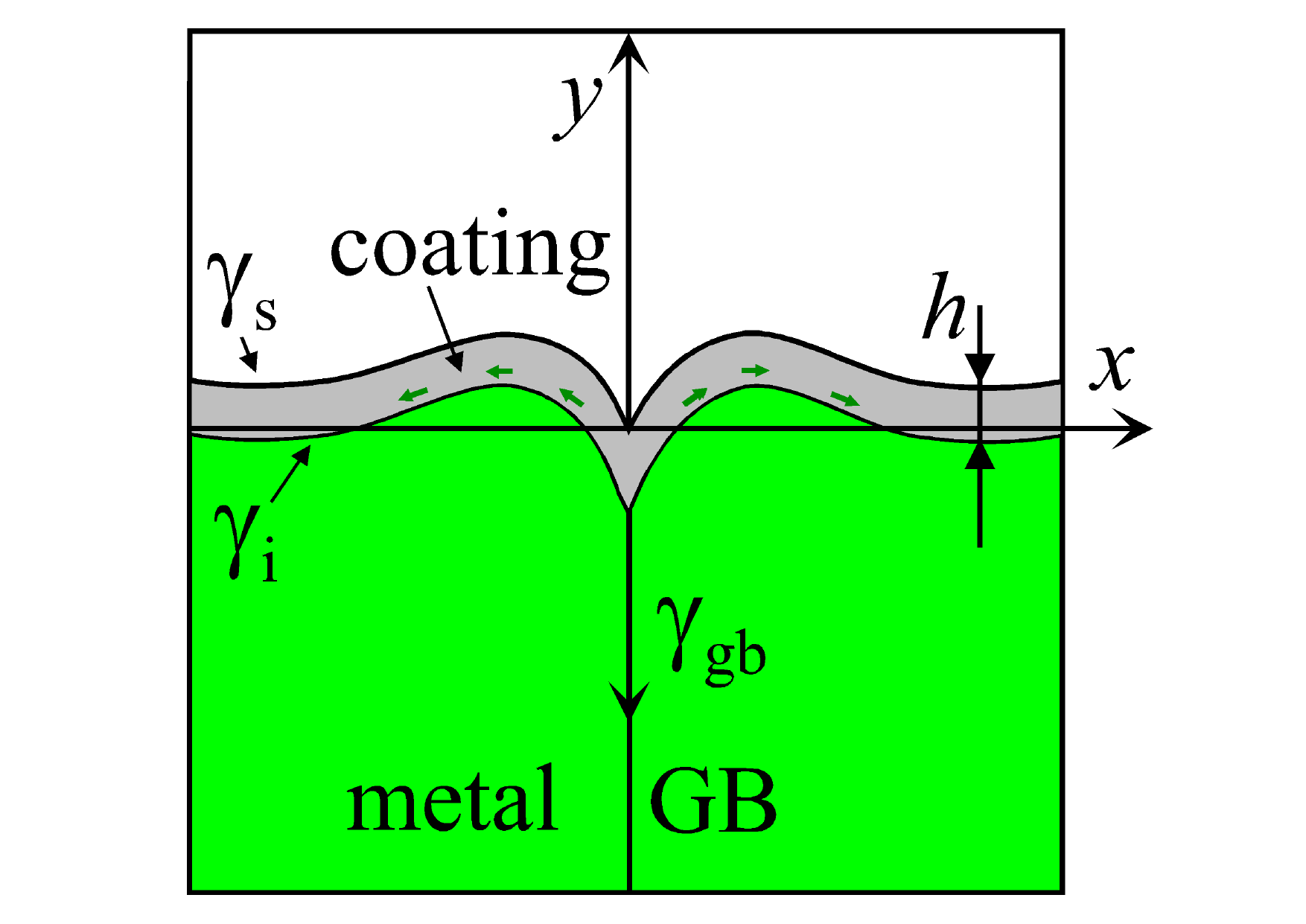}
 \caption{\footnotesize{Schematic of a metal bicrystal with passivating coating of thickness $h$. The diffusion flux of metal atoms along the metal-coating interface controlling the rate of groove growth is schematically shown by dark green arrows.}}
 \label{fig:GB_groove}
 \end{figure}
 
\begin{remark}
Note that as the slope of the surface at the groove root gets larger the error introduced by linearization, via small slope approximation, may be of the same order of magnitude as the error introduced by the effect of an elastic passivation layer, \cite{Robertson1971}, \cite{Tritscher1995}. Fortunately, for the most GB grooves observed in metals the aforementioned slope is less than $1/6$, and hence the linearized model can safely be employed.
\end{remark} 

\begin{remark}
We further note that $\alpha$ has dimension $m^{2}$. In the description of the model when we describe $\alpha$ as ''small'', we mean that $\alpha$ is small relative to $(Bt_{c})^{1/2}$ which is a square of the typical width of a groove after annealing for the time $t_{c}$.
\end{remark}

In order to study the behavior of the solutions of \eqref{a1} for small $\alpha$, we treat the initial boundary value problem \eqref{a1}-\eqref{a4} on the semi-infinite line $x\in \mathbb{R}_{+}$ as a singular perturbation of Mullins' linear surface diffusion equation (ME)
\begin{equation}\label{me}
\frac{\partial y}{\partial t}=-B\frac{\partial^{4}y}{\partial x^{4}},\quad x\in\mathbb{R}_{+},\quad t>0,
\end{equation}
together with the following initial and boundary conditions
\begin{eqnarray}
\label{me:ic}&&y(x,0)=0,\\
\label{me:bc}&&\frac{\partial y}{\partial x}(0,t)=\frac{m}{2},\quad \frac{\partial^{3}y}{\partial x^{3}}(0,t)=0,\quad \lim_{x\rightarrow\infty}\frac{\partial^{i}y}{\partial x^{i}}(0,t)=0,\quad t>0,
\end{eqnarray}
where $0<m<1/3$ (in the case of metals). \eqref{a1}-\eqref{a4} is called the \textit{singular problem} and \eqref{me}-\eqref{me:bc} is called the \textit{degenerate problem}.

A standard singular perturbation argument, \cite{Hinch}, \cite{Nayfeh}, consists of constructing a uniformly valid approximation of the solution to the singular problem which is valid throughout the domain, by adding two expansions, which are called outer expansion and inner expansion  and then by subtracting their common form $y_{overlap}$ which is valid in the overlap (intermediate) region. The outer expansion, $y_{out}$, is dominant in the outer region where the effect of the term with small parameter, namely $\alpha B\frac{\partial^{6}y}{\partial x^{6}}$, in \eqref{a1} is negligible, while the inner expansion, $y_{in}$,  is dominant in the boundary layer and the corner layer where the same term has a substantial effect  and is retained, see Fig.\ref{fig:layers}. 
 
In the outer region, the singular problem reduces to the degenerate problem, whose solution is given by

\begin{multline}
y_{out}(x,t;\alpha)=\frac{m}{2}x-\frac{mx^{2}}{4\sqrt{2}(Bt)^{1/4}\Gamma\left(\frac{3}{4}\right)}\prescript{}{1}{F}_{3}^{}\left(\frac{1}{4};\frac{3}{4},\frac{5}{4},\frac{3}{2};\frac{x^{4}}{256Bt}\right)\\
-\frac{m(Bt)^{1/4}}{2\sqrt{2}\Gamma\left(\frac{5}{4}\right)}\prescript{}{1}{F}_{3}^{}\left(-\frac{1}{4};\frac{1}{4},\frac{1}{2},\frac{3}{4};\frac{x^{4}}{256Bt}\right)+O(\alpha)\nonumber
\end{multline}
where the functions $\prescript{}{1}{F}_{3}^{}(a_{1};b_{1}, b_{2},b_{3};\cdot)$ with $a_{1}$, $b_{1}$, $b_{2}$, $b_{3}\in \mathbb{R}$ denote generalized hypergeometric functions, \cite{NIST}, \cite{Kalantarova2019}. Note that
\begin{equation}\label{ap:out}
y(x,t)\approx y_{out}(x,t;\alpha)
\end{equation}
is a good approximation except near $x=0$ since it does not satisfy \eqref{a5}. Thus, we anticipate a boundary layer near $x=0$, yielding an asymptotic expansion of the form
\begin{equation}
y(x,t)\approx y_{uniform}(x,t;\alpha)=y_{out}(x,t;\alpha)+y_{in}(x,t;\alpha)-y_{overlap}(x,t;\alpha),\nonumber
\end{equation}
where the uniform (composite) approximation $y_{uniform}$ satisfies \eqref{a2}-\eqref{a4}.

To determine the inner expansion, $y_{in}$, we transform the singular differential equation by variables that depend on $\alpha$ and that stretch the subregions of the boundary and the corner layer, see Fig \ref{fig:layers}.
\begin{figure}[ht]
  \centering
  \includegraphics[width=.47\textwidth]{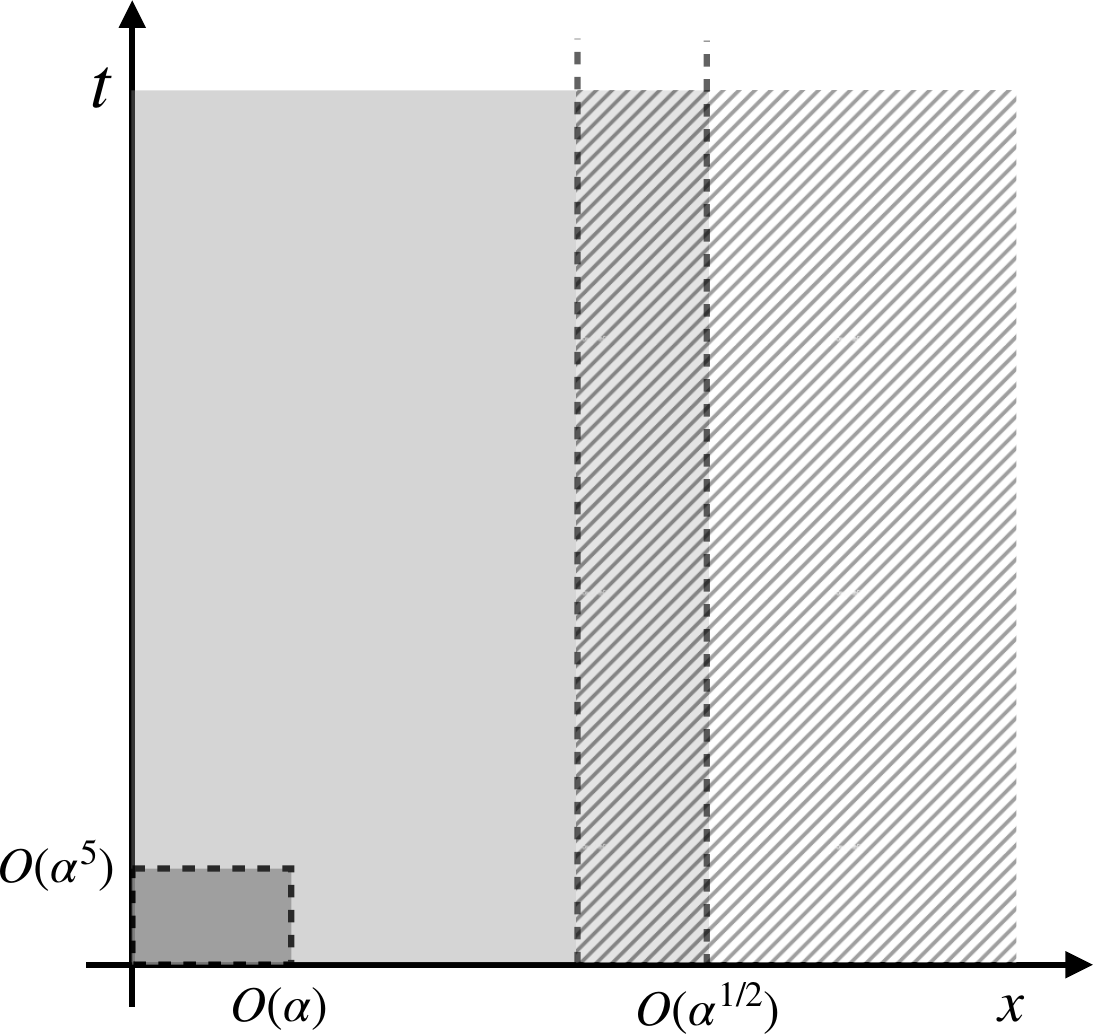}
 \caption{\footnotesize{Dark gray rectangular region represents the corner layer (small $t$, small $x$), semi-infinite gray region represents the boundary layer (small $x$) and the region filled with oblique lines is the outer region.}}
 \label{fig:layers}
 \end{figure}
The following equations are obtained by the least degeneracy principle
\begin{equation}\label{eq:b}
\frac{\partial^{6}y_{b}}{\partial \xi^{6}}-\frac{\partial^{4}y_{b}}{\partial \xi^{4}}=0,
\end{equation} 
\begin{equation}\label{eq:c}
\frac{\partial y_{c}}{\partial \tau}=B\frac{\partial^{6}y_{c}}{\partial \zeta^{6}},
\end{equation}
which are satisfied by the boundary layer and the corner layer correction terms, respectively. The derivation of the boundary and corner correction terms, together with a complete and detailed singular perturbation argument is presented in Section \ref{sec:perturbation}.

Finally, in section 4 we discuss plausible values for the physical parameters and provide groove profile plots which illustrate that the results derived in section 3 are physically meaningful. 

\section{The model}\label{sec:model}
In this section, we present the derivation of the differential equation that describes the evolution of a groove on a metal surface coated by a thin elastic film, which was originally derived in \cite{Rabkin2012} and establish the boundary conditions. To do so, we formulate the equilibrium configuration as a variational problem using the principles of the elasticity theory, \cite{Landau1970}.

The main difficulty in analyzing the GB grooving in a bicrystal with a thin elastic surface coating is associated with handling the elastic singularity at the root of the GB groove. We will bypass this difficulty by noting that refractory layers are usually brittle and tend to fracture at large elastic strains. Thus we will assume that the passivation layer is fractured at the root of the GB groove, and that the two halves of the bicrystal can be considered independently. Hence it is sufficient to consider the profile $y(x,t)$ for $x\in(0,\infty)$. The total energy for $x> 0$ is given by
\begin{multline}\label{e1}
G=\frac{1}{2}\gamma_{gb}y(0)+(\gamma_{i}+\gamma_{s})\int_{0}^{\infty}\sqrt{1+\left(\frac{\partial y}{\partial x}\right)^{2}}dx+\\
+\frac{Eh^{3}}{24(1-\nu^{2})}\int_{0}^{\infty}\left(\frac{\partial^{2}y}{\partial x^{2}}\right)^{2}dx
\end{multline}
where the first term on the right hand side corresponds to the grain boundary energy, the second term to the interface energy between the metal and the elastic film and the energy of the external film surface, while the third term describes the energy related with elastic deformation of the film. 

We consider the  first variation of  $G$  at $y$ in the direction of $\varphi\not=0$, which we denote by $\delta G$. The $\delta G$ is defined as
\begin{equation}\label{e2b}
\langle \delta G[y]; \varphi\rangle:=\left.\frac{d}{d\varepsilon}G[y+\varepsilon\varphi]\right|_{\varepsilon=0}
\end{equation}
where $\langle\cdot,\cdot\rangle$ is the standard $L^{2}$ product. Assuming that the functions involved are sufficiently smooth, we obtain

\begin{multline}\label{e2d}
\langle \delta G[y]; \varphi\rangle=\frac{1}{2}\gamma_{gb}\varphi(0)+(\gamma_{i}+\gamma_{s})\left.\frac{\frac{\partial y}{\partial x}\varphi}{\sqrt{1+\left(\frac{\partial y}{\partial x}\right)^{2}}}\right|_{0}^{\infty}\\
+(\gamma_{i}+\gamma_{s})\int_{0}^{\infty}\left(-\frac{\frac{\partial^{2} y}{\partial x^{2}}}{\sqrt{1+\left(\frac{\partial y}{\partial x}\right)^{2}}}-\frac{\left(\frac{\partial y}{\partial x}\right)^{2}\frac{\partial^{2} y}{\partial x^{2}}}{(1+\left(\frac{\partial y}{\partial x}\right)^{2})^{3/2}}\right)\varphi dx+\\
+(\gamma_{i}+\gamma_{s})\alpha\left(\left.\frac{\partial^{2} y}{\partial x^{2}}\frac{\partial\varphi}{\partial x}\right|_{0}^{\infty}-\left.\frac{\partial^{3} y}{\partial x^{3}}\varphi\right|_{0}^{\infty}+\int_{0}^{\infty} \frac{\partial^{4} y}{\partial x^{4}}\varphi dx\right),\nonumber
\end{multline}
where
\begin{equation}\label{e2e}
\alpha=\frac{Eh^{3}}{12(1-\nu^{2})(\gamma_{i}+\gamma_{s})}.
\end{equation}
The equilibrium state of the system is described by zero energy variation $\delta G=0$. Making small slope approximation, that is assuming $\frac{\partial y}{\partial x}\ll 1$, the equilibrium configuration is given by the differential equation
\begin{equation}
\frac{\partial^{2}y}{\partial x^{2}}-\alpha \frac{\partial^{4}y}{\partial x^{4}}=0,\nonumber
\end{equation}
supplemented by the following boundary conditions
\begin{eqnarray}
\label{e10}&&\frac{\partial y}{\partial x}(0,t)-\alpha\frac{\partial^{3}y}{\partial x^{3}}(0,t)=\frac{\gamma_{gb}}{2(\gamma_{i}+\gamma_{s})},\\
\label{e11}&&\frac{\partial^{2}y}{\partial x^{2}}(0,t)=0,\\
\label{e11a}&&\lim_{x\rightarrow\infty}\frac{\partial^{i}y}{\partial x^{i}}(x,t)=0,\quad i=0,\ 1,\ 2.
\end{eqnarray}

The chemical potential of the atoms at the interface, which we denote by $\mu$, can be obtained via
\begin{equation}\label{e2}
\mu=\Omega \delta G.
\end{equation} 
Substituting the resulting expression for $\delta G$ yields
\begin{equation}\label{e3}
\mu=\Omega (\gamma_{i}+\gamma_{s})\left(-\frac{\partial^{2}y}{\partial x^{2}}+\alpha \frac{\partial^{4}y}{\partial x^{4}}\right).
\end{equation}
The diffusion flux of atoms along the interface is proportional to the gradient of the chemical potential
\begin{equation}\label{e5}
j=-\frac{D_{i}n}{kT}\frac{\partial \mu}{\partial x}=\frac{D_{i}n\Omega(\gamma_{i}+\gamma_{s})}{kT}\left(\frac{\partial^{3}y}{\partial x^{3}}-\alpha \frac{\partial^{5}y}{\partial x^{5}}\right).
\end{equation}
Since the velocity of the displacement of the atoms along the interface is proportional to the divergence of the flux, we obtain
\begin{equation}\label{e6}
\frac{\partial y}{\partial t}=-\Omega\frac{\partial j}{\partial x}=-B\left(\frac{\partial^{4}y}{\partial x^{4}}-\alpha\frac{\partial^{6}y}{\partial x^{6}}\right),
\end{equation}
where							
\begin{equation}\label{e7}
B=\frac{D_{i}n\Omega^{2}(\gamma_{i}+\gamma_{s})}{kT}.
\end{equation}
We study the partial differential equation \eqref{e6} subject to the boundary conditions \eqref{e10}-\eqref{e11a}, which must hold for equilibrium state to be achieved, and the boundary condition given by
\begin{equation}\label{e8}
\frac{\partial^{3}y}{\partial x^{3}}(0,t)-\alpha\frac{\partial^{5}y}{\partial x^{5}}(0,t)=0,
\end{equation}
which follows from the zero flux assumption at the grain boundary.

\section{Perturbation Analysis}\label{sec:perturbation}

We next construct an asymptotic approximation to the solution of Rabkin's passivated surface evolution equation, \cite{Rabkin2012},
\begin{equation}\label{f0}
\frac{\partial y}{\partial t}=B\left(\alpha\frac{\partial^{6}y}{\partial x^{6}}-\frac{\partial^{4}y}{\partial x^{4}}\right),\quad x\in(0,\infty),\quad t>0,
\end{equation}
subject to the following boundary conditions (for derivation and physical meaning of these boundary conditions refer to Section \ref{sec:model})
\begin{eqnarray}
&&\label{f1a}
\frac{\partial y}{\partial x}(0,t)-\alpha\frac{\partial^{3}y}{\partial x^{3}}(0,t)=\frac{\gamma_{gb}}{2(\gamma_{i}+\gamma_{s})}\approx\frac{m}{2},\quad t>0,\\
&&\label{f1b}
\frac{\partial^{3}y}{\partial x^{3}}(0,t)-\alpha\frac{\partial^{5}y}{\partial x^{5}}(0,t)=0,\quad t>0,\\
&&\label{f1c}\frac{\partial^{2}y}{\partial x^{2}}(0,t)=0,\quad t>0,\\
&&\label{f1d}
\lim_{x\rightarrow\infty}\frac{\partial^{i}y}{\partial x^{i}}(x,t)=0,\quad i=0,\ 1,\ 2,\quad t>0.
\end{eqnarray}
and the initial planarity condition
\begin{equation}\label{si}
y(x,0)\equiv 0.
\end{equation}
Since \eqref{f0} does not remain a sixth order equation after setting the small parameter $\alpha$ to zero, we employ singular perturbation theory to study the above initial boundary value problem. Because of this reduction of the order, we cannot expect the solution of the resulting degenerate problem to satisfy all six boundary conditions \eqref{f1a}-\eqref{f1d}. We proceed by determining the outer expansion and the boundary condition that needs to be dropped, which lets us to determine where the boundary layers exist. Next, we construct the inner expansion. Finally, we construct a uniformly valid composite expansion.

\subsection*{Outer expansion:} First, we investigate the behavior of the solution of \eqref{f0} in the outer region, $y_{out}$, where the effect from the term containing the small parameter $\alpha$ is insignificant. We seek an outer expansion of second order (i.e. an outer expansion with an error term of order $O(\alpha^{3})$)
\begin{equation}\label{oeg}
y_{out}(x, t;\alpha)=y_{0}(x, t)+\alpha y_{1}(x, t)+\alpha^{2}y_{2}(x,t)+O(\alpha^{3}).
\end{equation}

To determine this expansion we substitute \eqref{oeg} into \eqref{f0} and then equate coefficients of $\alpha$ that have the same power, which yields
\begin{eqnarray}
\label{oe1}&&\frac{\partial y_{0}}{\partial t}=-B\frac{\partial^{4}y_{0}}{\partial x^{4}},\\
\label{oe3}&&\frac{\partial y_{r}}{\partial t}=-B\frac{\partial^{4}y_{r}}{\partial x^{4}}+B\frac{\partial^{6}y_{r-1}}{\partial x^{6}}\ \mbox{ for }\ r=1,2.
\end{eqnarray}
Note that \eqref{oe1} is simply ME given in \eqref{me}, \cite{Mullins1957}. Since ME is a fourth order parabolic equation, we can expect it to satisfy four of the six boundary conditions given in \eqref{f1a}-\eqref{f1d}. Because of the physical phenomenon that we are studying, the condition of asymptotic flatness must be satisfied. In \cite{Kalantarova2019}, Kalantarova et al. show that all the asymptotically decaying self-similar solutions of ME, subject to the initial planarity condition, that have the form
\begin{equation} \label{f3}
y(x,t) = (Bt)^{1/4} Z(u),\quad u=x/(Bt)^{1/4},
\end{equation}
where $Z(u)$ satisfies
\begin{equation}\label{f3a} 
Z^{(4)}(u)-\frac{1}{4}uZ'(u)+\frac{1}{4}Z(u)=0,
\end{equation}
are  generated by the following two parameter family of solutions
\begin{equation}\label{y0}
y_{0}(x,t)=c_{1}f_{1}(x,t)+c_{2}f_{2}(x,t),
\end{equation}
where $c_{1}$, $c_{2}$ are arbitrary constants and 
\begin{multline}\label{f1}
f_{1}(x,t)=\frac{x}{\sqrt{2}}-\frac{x^{2}}{2(Bt)^{1/4}\Gamma\left(\frac{3}{4}\right)}\prescript{}{1}{F}_{3}^{}\left(\frac{1}{4};\frac{3}{4},\frac{5}{4},\frac{3}{2};\frac{x^{4}}{256Bt}\right)\\
+\frac{x^{3}}{6\sqrt{2}\Gamma\left(\frac{1}{2}\right)(Bt)^{1/2}}\prescript{}{1}{F}_{3}^{}\left(\frac{1}{2};\frac{5}{4},\frac{3}{2},\frac{7}{4};\frac{x^{4}}{256Bt}\right),
\end{multline}
\begin{multline}\label{f2}
f_{2}(x,t)=\frac{(Bt)^{1/4}}{\Gamma\left(\frac{5}{4}\right)}\prescript{}{1}{F}_{3}^{}\left(-\frac{1}{4};\frac{1}{4},\frac{1}{2},\frac{3}{4};\frac{x^{4}}{256Bt}\right)-\frac{x}{\sqrt{2}}\\
+\frac{x^{3}}{6\sqrt{2}\Gamma\left(\frac{1}{2}\right)(Bt)^{1/2}}\prescript{}{1}{F}_{3}^{}\left(\frac{1}{2};\frac{5}{4},\frac{3}{2},\frac{7}{4};\frac{x^{4}}{256Bt}\right).
\end{multline}
Here  $\prescript{}{p}{F}_{q}^{}(a_{1},\ldots,a_{p};b_{1},\ldots,b_{q};u)$ denote the generalized hypergeometric functions  or the generalized hypergeometric series defined as
\begin{multline}\label{def:hf}
\prescript{}{p}{F}_{q}^{}(a_{1},\ldots,a_{p};b_{1},\ldots,b_{q};\nu)=\sum_{k=0}^{\infty}\frac{(a_{1})_{k}\ldots(a_{p})_{k}}{(b_{1})_{k}\ldots(b_{q})_{k}}\frac{\nu^{k}}{k!}\\
=1+\frac{a_{1}\ldots a_{p}}{b_{1}\ldots b_{q}}\nu+\frac{a_{1}(a_{1}+1)\ldots a_{p}(a_{p}+1)}{b_{1}(b_{1}+1)\ldots b_{q}(b_{q}+1)2!}\nu^{2}+\ldots,
\end{multline}
in which $\{a_{i}\}_{i=1}^{p}$, $\{b_{i}\}_{i=1}^{q}\in\mathbb{R}$ and
\begin{equation}
(\lambda)_{k}=\frac{\Gamma(\lambda+k)}{\Gamma(\lambda)}=\lambda(\lambda+1)\ldots(\lambda+k-1)\nonumber
\end{equation}
is the Pochhammer symbol, \cite{Askey2010}. One of the advantages of this representation over the series solution with recursively defined coefficients is that it can be plotted over arbitrarily large domains, which is useful for purpose of asymptotical matching. 

Substituting $y_{out}$ (defined in \eqref{oeg} where $y_{0}$ is given by \eqref{y0}-\eqref{f2}) into \eqref{f1a}-\eqref{f1c}, namely the boundary conditions at $x=0$, we get
\begin{equation}
\frac{c_{1}-c_{2}}{\sqrt{2}}=\frac{m}{2},\quad \frac{c_{1}+c_{2}}{\sqrt{2\pi}\sqrt{Bt}}=0,\quad -\frac{c_{1}}{(Bt)^{1/4}\Gamma\left(\frac{3}{4}\right)}=0.
\end{equation}
Observe that there exist no $c_{1}$, $c_{2}\in\mathbb{R}$ for which all of the above conditions are satisfied, thus there is a boundary layer near $x=0$. Before calculating $y_{1}$ and $y_{2}$ in \eqref{oeg}, we proceed with determining where the boundary layers lie, which are the regions where the term with the small parameter $\alpha$ is of the same order or larger than the other terms involved in the equation \eqref{f0}. Then, we calculate the boundary layer correction term, which will simplify the calculation of $y_{1}$ and $y_{2}$.

\subsection*{The boundary and corner layer:}  We transform \eqref{f0} by 
\begin{equation}
\tilde{y}(\xi,\tau)=y\left(\frac{x}{\alpha^{\mu}},\frac{t}{\alpha^{\nu}}\right),
\end{equation}
where $\mu, \nu>0$ are to be determined later. Substituting $\tilde{y}$ into \eqref{f0}, yields that
\begin{equation}\label{eq:yin}
\alpha^{-\nu}\frac{\partial \tilde{y}}{\partial \tau}=B\left(\alpha^{1-6\mu}\frac{\partial^{6}\tilde{y}}{\partial \xi^{6}}-\alpha^{-4\mu}\frac{\partial^{4}\tilde{y}}{\partial \xi^{4}}\right).
\end{equation}
We need to set the coefficient of the highest derivative to 1. This can be achieved for two different sets of values of $\mu$ and $\nu$:
\begin{equation}
\mu_{1}=\frac{1}{2},\quad \nu_{1}=0,
\end{equation}
and

\begin{equation}
\mu_{2}=1,\quad \nu_{2}=5.
\end{equation}
By setting $\mu=\mu_{1}=1/2$ and $\nu=\nu_{1}=0$ in \eqref{eq:yin}, we obtain the following boundary layer equation
\begin{equation}\label{eq:yinb}
\frac{\partial^{6}y_{b}}{\partial \xi^{6}}-\frac{\partial^{4}y_{b}}{\partial \xi^{4}}=\frac{\alpha^{2}}{B}\frac{\partial y_{b}}{\partial t},
\end{equation}
where
\begin{equation}\label{bl:var}
\xi=\frac{x}{\alpha^{1/2}},
\end{equation}
is the boundary layer variable. On the other hand, setting $\mu=\mu_{2}=1$ and $\nu=\nu_{2}=5$ leads to the following corner layer equation
\begin{equation}\label{eq:yinc}
\frac{\partial y_{c}}{\partial \tau}=B\frac{\partial^{6}y_{c}}{\partial \zeta^{6}}+\alpha B\frac{\partial^{4}y_{c}}{\partial\zeta^{4}},
\end{equation}
where
\begin{equation}\label{cl:var}
\tau=\frac{t}{\alpha^{5}}\quad\mbox{and}\quad\zeta=\frac{x}{\alpha},
\end{equation}
are corner layer variables. See Figure \ref{fig:layers} for illustration of regions of boundary and corner layers.

\bigskip

Here we use the technique of Bromberg, Vishik and Lusternik (see \cite[page 146]{Nayfeh}) to determine the composite expansion. The advantage of this technique is that it lets us to construct a uniformly valid expansion without the need for matching the outer and inner expansions. We assume that
\begin{eqnarray}
y(x,t;\alpha)&=&F(x,t;\alpha)+G(\xi,t;\alpha)+H(\zeta,\tau;\alpha)\nonumber\\
&=&\sum_{r=0}^{\infty}\alpha^{r}F_{r}(x,t)+\sum_{r=0}^{\infty}\alpha^{r/2}G_{r}(\xi,t)+\sum_{r=0}^{\infty}\alpha^{r}H_{r}(\zeta,\tau)\nonumber\\
&=&F_{0}(x,t)+G_{0}(\xi,t)+H_{0}(\zeta,\tau)+\alpha^{1/2}G_{1}(\xi,t)\nonumber\\
&&+\alpha[F_{1}(x,t)+G_{2}(\xi,t)+H_{1}(\zeta,\tau)]+\alpha^{3/2}G_{3}(\xi,t)\nonumber\\
\label{y:comp}&&+\alpha^{2}[F_{2}(x,t)+G_{4}(\xi,t)+H_{2}(\zeta,\tau)]+O(\alpha^{3})
\end{eqnarray}
where $\xi$ is the boundary layer variable defined in \eqref{bl:var} and $\zeta$, $\tau$ are the corner layer variables defined in \eqref{cl:var} and where
\begin{eqnarray}
\label{bl:dec}&&\lim_{\xi\rightarrow\infty}G(\xi,t;\alpha)=0,\\
\label{cl:dec}&&\lim_{\zeta\rightarrow\infty}H(\zeta,\tau;\alpha)=0,\\
\label{cl:dect}&&\lim_{\tau\rightarrow\infty}H(\zeta,\tau;\alpha)=0,
\end{eqnarray}
i.e., $G(\xi,t;\alpha)$ is negligible outside the boundary layer and $H(\zeta,\tau;\alpha)$ is negligible outside the corner layer.

It follows from \eqref{bl:dec} and \eqref{cl:dec} that
\begin{equation}
y_{out}(x,t;\alpha)=F(x,t;\alpha)=F_{0}(x,t)+\alpha F_{1}(x,t)+\alpha^{2}F_{2}(x,t)+O(\alpha^{3}).\nonumber
\end{equation}
Then, we have $F_{r}(x,t)\equiv y_{r}(x,t)$ for $r=0,1,2,...$, where $y_{r}(x,t)$ are defined in \eqref{oeg}.

On the other hand, 
\begin{eqnarray}
y_{in}(x,t;\alpha)&=&G(\xi,t;\alpha)+H(\zeta,\tau;\alpha)+y_{overlap}(x,t;\alpha)\nonumber\\
&=&y_{b}(x,t;\alpha)+y_{c}(x,t;\alpha),\nonumber
\end{eqnarray}
where $y_{b}$ is the boundary layer correction term that satisfies the equation \eqref{eq:yinb} and $y_{c}$ is the corner layer correction term that satisfies the equation \eqref{eq:yinc}.

\begin{remark}\label{cl:small}
For $t=O(\alpha^{5})$ and $x\in(0,\infty)$, we have 
\begin{equation}
y_{0}(x,t)=O(\alpha).\nonumber
\end{equation} 
That is, inside the corner layer the values that decaying solutions of ME attain are negligible. Then so is the contribution to the composite expansion from the corner layer correction term even inside the corner layer. Nonetheless, we will solve the corner layer equation for the sake of completeness of the analysis of the problem.
\end{remark}

\subsection*{Boundary layer correction term:}  Recall that $x=\alpha^{1/2}\xi$. Since the effect of the corner layer term on composite expansion is negligible, we have
\begin{multline}\label{y:bexp}
y_{b}(\xi,t;\alpha)=F_{0}(0,t)+G_{0}(\xi,t)+\alpha^{1/2}\left[\frac{\partial F_{0}(0,t)}{\partial x}\xi+G_{1}(\xi,t)\right]\\
+\alpha\left[\frac{1}{2}\frac{\partial^{2}F_{0}(0,t)}{\partial x^{2}}\xi^{2}+F_{1}(0,t)+G_{2}(\xi,t)\right]\\
+\alpha^{3/2}\left[\frac{1}{3!}\frac{\partial^{3}F_{0}(0,t)}{\partial x^{3}} \xi^{3}+\frac{\partial F_{1}(0,t)}{\partial x}\xi+G_{3}(\xi,t)\right]\\
+\alpha^{2}\left[\frac{1}{4!}\frac{\partial^{4}F_{0}(0,t)}{\partial x^{4}}\xi^{4}+\frac{1}{2!}\frac{\partial^{2}F_{1}(0,t)}{\partial x^{2}}\xi^{2}+F_{2}(0,t)+G_{4}(\xi,t)\right]\\
+O(\alpha^{3}).
\end{multline}
Substituting \eqref{y:bexp} into \eqref{eq:yinb} and then equating the coefficients of $\alpha^{r}$, for $r=0,\frac{1}{2}, 1, \frac{3}{2}, 2,$ we obtain
\begin{eqnarray}
\label{bl:eq1}&&\frac{\partial^{6}G_{r}}{\partial \xi^{6}}-\frac{\partial^{4}G_{r}}{\partial\xi^{4}}=0,\ \mbox{ for }\ r=0,1,2,3,\\
\label{bl:eq3}&&\frac{\partial^{6}G_{4}}{\partial \xi^{6}}-\frac{\partial^{4}G_{4}}{\partial\xi^{4}}-\frac{\partial^{4}F_{0}(0,t)}{\partial x^{4}}=\frac{1}{B}\left[\frac{\partial F_{0}(0,t)}{\partial t}+\frac{\partial G_{0}(\xi,t)}{\partial t}\right].
\end{eqnarray}
Solving \eqref{bl:eq1}-\eqref{bl:eq3} under the assumption \eqref{bl:dec}, we get
\begin{equation}\label{sol:b2}
G(\xi,t;\alpha)\approx\sum_{r=0}^{3}\alpha^{r/2}\beta_{r}(t)e^{-\xi}+\alpha^{2}\beta_{4}(t)G_{4}(\xi,t),
\end{equation}
where $\beta_{r}(t)$, $r=0,1,2,3,4$, and $G_{4}(\xi,t)$ are to be determined later.
 
\begin{remark}\label{bc:0}
Note that the boundary conditions \eqref{f1a}-\eqref{f1c} must be satisfied for all $t>0$. Then they must be satisfied by $y_{b}$ given in \eqref{y:bexp}, due to Remark \ref{cl:small} and \eqref{cl:dect}.
\end{remark}

Thus, we substitute \eqref{y:bexp} into \eqref{f1b}, which yields
\begin{equation}\label{f1o}
c_{1}=-c_{2}.
\end{equation}
Next, substituting \eqref{y:bexp} into \eqref{f1a} and using \eqref{f1o}, we arrive at
\begin{equation}\label{f2o}
c_{1}=-c_{2}=\frac{m}{2\sqrt{2}}.
\end{equation}
\begin{remark}
The conditions \eqref{f1o} and \eqref{f2o} are equivalent to the boundary conditions proposed by Mullins, \cite{Mullins1957},
\begin{equation}
\frac{\partial y_{0}}{\partial x}(0,t)=\frac{m}{2},\quad \frac{\partial^{3}y_{0}}{\partial x^{3}}(0,t)=0,\nonumber
\end{equation}
respectively. As a result, under these conditions $y_{0}$ is Mullins' solution
\begin{multline}\label{y:out}
y_{0}(x,t)=\frac{m}{2}x-\frac{mx^{2}}{4\sqrt{2}(Bt)^{1/4}\Gamma\left(\frac{3}{4}\right)}\prescript{}{1}{F}_{3}^{}\left(\frac{1}{4};\frac{3}{4},\frac{5}{4},\frac{3}{2};\frac{x^{4}}{256Bt}\right)\\
-\frac{m(Bt)^{1/4}}{2\sqrt{2}\Gamma\left(\frac{5}{4}\right)}\prescript{}{1}{F}_{3}^{}\left(-\frac{1}{4};\frac{1}{4},\frac{1}{2},\frac{3}{4};\frac{x^{4}}{256Bt}\right).\nonumber
\end{multline}
\end{remark}
\noindent Now that we have determined $y_{0}$, we proceed with the calculation of $y_{1}$ and $y_{2}$.

\subsection*{Outer expansion (continued):} It follows from substituting \eqref{y:bexp} into \eqref{f1a}-\eqref{si} that $y_{1}(x,t)$ must satisfy the following initial and boundary conditions
\begin{equation}
y_{1}(x,0)=0,\quad \frac{\partial y_{1}}{\partial x}(0,t)=0,\quad \frac{\partial^{3}y_{1}}{\partial x^{3}}(0,t)=0.
\end{equation}

We take the Fourier cosine transform of \eqref{oe3} for $r=1$ in $x$ and obtain
\begin{equation}\label{f7}
\frac{\partial Y_{1c}}{\partial t}(k,t)+Bk^{4}Y_{1c}(k,t)=-B\frac{m}{2}k^{4}e^{-Bk^{4}t}
\end{equation}
with

\begin{equation}\label{f7a}
Y_{1c}(k,0)=0,
\end{equation}
where
\begin{equation}\label{f8}
\mathcal{F}_{x}^{(c)}\{y(x,t)\}(k,t)\equiv Y_{c}(k,t)=\int_{0}^{\infty}y(x,t)\cos(kx)dx,
\end{equation}
represents the Fourier cosine transform of $y(x,t)$ with respect to the space variable $x$. The right hand side of \eqref{f7} follows from
\begin{equation}
\mathcal{F}_{x}^{(c)}\{y_{0}(x,t)\}(k,t)=\frac{m(e^{-k^{4}Bt}-1)}{2k^{2}},\nonumber
\end{equation}
which is calculated in \cite[(10) on page 127]{Martin2009} and the following property of Fourier cosine transform

\begin{multline}
\mathcal{F}_{x}^{(c)}\left\{\frac{\partial^{6}y}{\partial x^{6}}(x,t)\right\}(k,t)=\\
-k^{6}\mathcal{F}_{x}^{(c)}\{y(x,t)\}(k,t)-k^{4}\frac{\partial y}{\partial x}(0,t)+k^{2}\frac{\partial^{3}y}{\partial x^{3}}(0,t)-\frac{\partial^{5}y}{\partial x^{5}}(0,t).\nonumber
\end{multline}
Solving \eqref{f7}-\eqref{f7a} as a first order initial value problem in time we get
\begin{equation}\label{f9}
 Y_{1c}(k,t)=-Bt\frac{m}{2}k^{4}e^{-Bk^{4}t}.
\end{equation}
Taking the inverse Fourier cosine transform gives
\begin{eqnarray}
y_{1}(x,t)&=&\frac{2}{\pi}\int_{0}^{\infty}-Bt\frac{m}{2}k^{4}e^{-Bk^{4}t}\cos(kx)dk\nonumber\\
&=&-\frac{m \Gamma \left(\frac{1}{4}\right) }{16 \pi (B t)^{1/4}}\, _1F_3\left(\frac{5}{4};\frac{1}{4},\frac{1}{2},\frac{3}{4};\frac{x^4}{256 B t}\right)\nonumber\\
\label{yout:y1}&&-\frac{3 m x^2 \Gamma \left(-\frac{1}{4}\right) }{128  \pi (B t)^{3/4}}\, _1F_3\left(\frac{7}{4};\frac{3}{4},\label{f10}\frac{5}{4},\frac{3}{2};\frac{x^4}{256 B t}\right),
\end{eqnarray}
which is an asymptotically decaying function. Next, we repeat the same steps that we made in the derivation of $y_{1}$, \eqref{f10}, to derive $y_{r}$ for $r=2,3,\ldots$. In order to do so, we need to seek an outer solution of the form
\begin{equation}\label{q1}
y_{out}(x, t;\alpha)=\sum_{r=0}^{N}\alpha^{r}y_{r}(x,t)+O(\alpha^{N+1}).
\end{equation}
where $N>2$, instead of \eqref{oeg}. Then, additionally to \eqref{oe1} and \eqref{oe3} we must solve
\begin{equation}
\label{q2}\frac{\partial y_{r}}{\partial t}=-B\frac{\partial^{4}y_{r}}{\partial x^{4}}+B\frac{\partial^{6}y_{r-1}}{\partial x^{6}},\mbox{ for }r=2,3,\ldots, N,
\end{equation}
which is simply the same equation in \eqref{oe3} for higher indices and which results from substituting \eqref{q1} into \eqref{f0}. Moreover, $y_{r}(x,t)$ for $r=3,\ldots,N$ satisfy the following initial and boundary conditions

\begin{equation}
\label{q4}y_{r}(x,0)=0,\quad \frac{\partial y_{r}}{\partial x}(0,t)=0,\quad \frac{\partial^{3}y_{r}}{\partial x^{3}}(0,t)=0,\mbox{ for }r=3,\ldots,N,
\end{equation}
which follow from substituting \eqref{q1} into (3.2)-(3.6). We solve \eqref{q2} again by using Fourier cosine transforms. First, we take the Fourier cosine transform of \eqref{q2} with respect to $x$
\begin{equation}\label{q3}
\frac{\partial Y_{rc}}{\partial t}(k,t)+Bk^{4}Y_{rc}(k,t)=(-B)^{r}t^{r-1}\frac{m}{2\times(r-1)!}k^{6r-2}e^{-Bk^{4}t}
\end{equation}
then solve the resulting ordinary differential equation
\begin{equation}\label{q5}
Y_{rc}(k,t)=(-Bt)^{r}\frac{m}{2\times r!}k^{6r-2}e^{-Bk^{4}t}
\end{equation}
and finally recover the solution of \eqref{q2} by taking the inverse Fourier cosine transform of \eqref{q5}
\begin{multline}\label{q6}
y_{rc}(x,t)=(-1)^{r}\left[\frac{m \Gamma \left(\frac{3 r}{2}-\frac{1}{4}\right) \, _1F_3\left(\frac{3 r}{2}-\frac{1}{4};\frac{1}{4},\frac{1}{2},\frac{3}{4};\frac{x^4}{256 B t}\right)}{4 \pi (B t)^{\frac{r}{2}-\frac{1}{4}} r!}\right.\\
\left.-\frac{m x^2 \Gamma \left(\frac{3 r}{2}+\frac{1}{4}\right) \, _1F_3\left(\frac{3 r}{2}+\frac{1}{4};\frac{3}{4},\frac{5}{4},\frac{3}{2};\frac{x^4}{256 B t}\right)}{8 \pi (B t)^{\frac{r}{2}+\frac{1}{4}} r!}\right].
\end{multline}
Note that, inserting $r=1$ in \eqref{q6} yields \eqref{yout:y1}. Then, we have
\begin{multline}\label{q8}
y_{out}(x,t;\alpha)=\frac{m}{2}x-\frac{mx^{2}}{4\sqrt{2}(Bt)^{1/4}\Gamma\left(\frac{3}{4}\right)}\prescript{}{1}{F}_{3}^{}(\frac{1}{4};\frac{3}{4},\frac{5}{4},\frac{3}{2};\frac{x^{4}}{256Bt})\\
-\frac{m(Bt)^{1/4}}{2\sqrt{2}\Gamma\left(\frac{5}{4}\right)}\prescript{}{1}{F}_{3}^{}(-\frac{1}{4};\frac{1}{4},\frac{1}{2},\frac{3}{4};\frac{x^{4}}{256Bt})\\
+\sum_{r=1}^{N}(-\alpha)^{r}\left[\frac{m \Gamma \left(\frac{3 r}{2}-\frac{1}{4}\right) \, _1F_3\left(\frac{3 r}{2}-\frac{1}{4};\frac{1}{4},\frac{1}{2},\frac{3}{4};\frac{x^4}{256 B t}\right)}{4 \pi (B t)^{\frac{r}{2}-\frac{1}{4}} r!}\right.\\
\left.-\frac{m x^2 \Gamma \left(\frac{3 r}{2}+\frac{1}{4}\right) \, _1F_3\left(\frac{3 r}{2}+\frac{1}{4};\frac{3}{4},\frac{5}{4},\frac{3}{2};\frac{x^4}{256 B t}\right)}{8 \pi (B t)^{\frac{r}{2}+\frac{1}{4}} r!}\right]+O(\alpha^{N+1}).
\end{multline}
Recalling Remark \ref{bc:0}, it follows from the requirement of the fulfillment of the boundary condition \eqref{f1c} that
\begin{equation}
\beta_{0}(t)=0,\quad \beta_{1}(t)=0,\quad \beta_{2}(t)=\frac{m}{2\sqrt{2}(Bt)^{1/4}\Gamma\left(\frac{3}{4}\right)}\quad \mbox{ and }\quad \beta_{3}(t)=0.\nonumber
\end{equation}
Now, we can solve \eqref{bl:eq3}
\begin{equation}
G_{4}(\xi,t)=\beta_{4}(t)e^{-\xi},\nonumber
\end{equation}
where
\begin{equation}
\beta_{4}(t)=-\frac{m \Gamma \left(\frac{7}{4}\right)}{4 \pi  (B t)^{3/4}},\nonumber
\end{equation}
due to \eqref{f1c}. Thus
\begin{equation}\label{q5}
G(\xi,t;\alpha)=\alpha\frac{ m}{2\sqrt{2}(Bt)^{1/4}\Gamma(\frac{3}{4})}e^{-\xi}-\alpha^{2}\frac{m \Gamma \left(\frac{7}{4}\right)}{4 \pi  (B t)^{3/4}}e^{-\xi}+O(\alpha^{3}),
\end{equation}
where $\xi=\frac{x}{\sqrt{\alpha}}$ is the boundary layer variable.

\subsection*{Corner layer correction term:} The corner layer equation \eqref{eq:yinc}
 \begin{equation}
\frac{\partial y_{c}}{\partial \tau}=B\frac{\partial^{6}y_{c}}{\partial \zeta^{6}},\nonumber
\end{equation}
has self-similar solutions of the form
\begin{equation}\label{yc:ss}
y_{c}(\zeta,\tau)=(B\tau)^{r}V(\zeta/(B\tau)^{1/6}),
\end{equation}
where $V=V(w)$ satisfies
\begin{equation}\label{fyc}
V^{(6)}(w)+\frac{1}{6}wV'(w)-rV(w)=0,\quad w=\zeta/(B\tau)^{1/6}.
\end{equation}
Using the theory of generalized hypergeometric equations, \cite{Askey2010}, we show that the linear ordinary differential equation above has the following fundamental set of solutions

\begin{eqnarray}
\label{fv1}&&v_{1}(w)\!=\, _1F_5\left(-r;\frac{1}{6},\frac{1}{3},\frac{1}{2},\frac{2}{3},\frac{5}{6};-\frac{w^6}{6^{6}}\right),\\
\label{fv2}&&v_{2}(w)=w\, _1F_5\left(\frac{1}{6}-r;\frac{1}{3},\frac{1}{2},\frac{2}{3},\frac{5}{6},\frac{7}{6};-\frac{w^6}{6^{6}}\right),\\
\label{fv3}&&v_{3}(w)\!=w^2 \, _1F_5\left(\frac{1}{3}-r;\frac{1}{2},\frac{2}{3},\frac{5}{6},\frac{7}{6},\frac{4}{3};-\frac{w^{6}}{6^{6}}\right),\\
\label{fv4}&&v_{4}(w)=w^3 \, _1F_5\left(\frac{1}{2}-r;\frac{2}{3},\frac{5}{6},\frac{7}{6},\frac{4}{3},\frac{3}{2};-\frac{w^6}{6^{6}}\right),
\end{eqnarray}
\begin{eqnarray}
\label{fv5}&&v_{5}(w)\!=w^4 \, _1F_5\left(\frac{2}{3}-r;\frac{5}{6},\frac{7}{6},\frac{4}{3},\frac{3}{2},\frac{5}{3};-\frac{w^6}{6^{6}}\right),\\
\label{fv6}&&v_{6}(w)=w^5 \, _1F_5\left(\frac{5}{6}-r;\frac{7}{6},\frac{4}{3},\frac{3}{2},\frac{5}{3},\frac{11}{6};-\frac{w^6}{6^{6}}\right).
\end{eqnarray}
Using Laplace transform methods to solve \eqref{eq:yinc} and taking \eqref{yc:ss} and \eqref{fv1}-\eqref{fv6} into consideration, we find that

\begin{equation}
\begin{bmatrix}
y_{c1}(\zeta,\tau)\\[0.3em]
y_{c2}(\zeta,\tau)\\[0.3em]
y_{c3}(\zeta,\tau)\\[0.3em]
y_{c4}(\zeta,\tau)\\[0.3em]
y_{c5}(\zeta,\tau)\\[0.3em]
y_{c6}(\zeta,\tau)
\end{bmatrix}
=(B\tau)^{r}\begin{bmatrix}
 1   &  1                        &  1                         & 1  & 1                           &  1 \\[0.5em]
 1   & \frac{1}{2}          &-\frac{1}{2}           & -1  &-\frac{1}{2}           &\frac{1}{2}  \\[0.5em]
 0   &\frac{\sqrt{3}}{2} &  \frac{\sqrt{3}}{2}&  0  &-\frac{\sqrt{3}}{2}  &-\frac{\sqrt{3}}{2}   \\[0.5em]
 1   & -1                        &  1                         &-1  & 1                           & -1 \\[0.5em]
 1   &-\frac{1}{2}          &-\frac{1}{2}           &  1  &-\frac{1}{2}           &-\frac{1}{2}  \\[0.5em]
 0   &\frac{\sqrt{3}}{2} &-\frac{\sqrt{3}}{2} &  0  &\frac{\sqrt{3}}{2}   &-\frac{\sqrt{3}}{2}   
\end{bmatrix}
\begin{bmatrix}
 \frac{1}{0!\Gamma(1+r)}v_{1}(w)\\[0.3em]
 \frac{1}{1!\Gamma\left(\frac{5}{6}+r\right)} v_{2}(w)\\[0.7em]
 \frac{1}{2!\Gamma\left(\frac{4}{6}+r\right)}v_{3}(w)\\[0.7em]
 \frac{1}{3!\Gamma\left(\frac{1}{2}+r\right)}v_{4}(w)\\[0.7em]
 \frac{1}{4!\Gamma\left(\frac{1}{3}+r\right)}v_{5}(w)\\[0.7em]
 \frac{1}{5!\Gamma\left(\frac{1}{6}+r\right)}v_{6}(w)
\end{bmatrix}.\nonumber
\end{equation}
Furthermore, for $\tau>0$
\begin{equation}
\lim_{\zeta\rightarrow\infty}\lvert y_{c1}(\zeta,\tau)\rvert=\lim_{\zeta\rightarrow\infty}\lvert y_{c2}(\zeta,\tau)\rvert=\lim_{\zeta\rightarrow\infty}\lvert y_{c3}(\zeta,\tau)\rvert=\infty\nonumber
\end{equation}
and
\begin{equation}
\lim_{\zeta\rightarrow\infty} y_{c4}(\zeta,\tau)=\lim_{\zeta\rightarrow\infty} y_{c5}(\zeta,\tau)=\lim_{\zeta\rightarrow\infty} y_{c6}(\zeta,\tau)=0.\nonumber
\end{equation}
 
 \begin{theorem}\label{thm:cle}
 The sixth order parabolic equation \eqref{eq:yinc}
 \begin{equation}
\frac{\partial y_{c}}{\partial \tau}=B\frac{\partial^{6}y_{c}}{\partial \zeta^{6}},\nonumber
\end{equation}
subject to boundary conditions

\begin{eqnarray}
\label{bc:c1}&&\alpha\frac{\partial y_{c}}{\partial\zeta}(0,\tau)-\frac{\partial^{3}y_{c}}{\partial\zeta^{3}}(0,\tau)=0,\\
\label{bc:c2}&&\alpha\frac{\partial^{3}y_{c}}{\partial\zeta^{3}}(0,\tau)-\frac{\partial^{5}y_{c}}{\partial\zeta^{5}}(0,\tau)=0,\\
\label{bc:c3}&&\lim_{\zeta\rightarrow\infty}\frac{\partial^{i}y_{c}}{\partial\zeta^{i}}(\zeta,\tau)=0,\quad\tau>0,\quad i=0,1,2,
\end{eqnarray}
has the following nontrivial similarity solutions of the form
\begin{multline}
y_{c}(\zeta,\tau)=-\frac{\gamma}{3}\!\left[\alpha^{2}(B\tau)^{\frac{2}{3}}\Gamma\left(\!\frac{1}{6}+r\!\right)+\alpha(B\tau)^{\frac{1}{3}}\Gamma\left(\!\frac{1}{2}+r\!\right)+\Gamma\left(\!\frac{5}{6}+r\!\right)\right]y_{c4}(\zeta,\tau)\\
-\frac{\gamma}{3}\!\left[\alpha^{2}(B\tau)^{\frac{2}{3}}\Gamma\left(\!\frac{1}{6}+r\!\right)-2\alpha(B\tau)^{\frac{1}{3}}\Gamma\left(\!\frac{1}{2}+r\!\right)+\Gamma\left(\!\frac{5}{6}+r\!\right)\right]y_{c5}(\zeta,\tau)\\
-\frac{\gamma}{\sqrt{3}}\left[\alpha^{2}(B\tau)^{\frac{2}{3}}\Gamma\left(\!\frac{1}{6}+r\!\right)-\Gamma\left(\!\frac{5}{6}+r\!\right)\right]y_{c6}(\zeta,\tau).\nonumber
\end{multline}
where $r<-\frac{2}{3}$ and $\gamma=O(\alpha)$ is an arbitrary constant.
\end{theorem}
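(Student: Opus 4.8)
The plan is to seek $y_c$ as an $\alpha$--graded combination of the three decaying self--similar profiles $y_{c4},y_{c5},y_{c6}$ constructed above, with coefficients allowed to carry the similarity variable $\tau$; concretely, an ansatz of the form
\begin{equation}
y_c(\zeta,\tau)=a_0\,u_0(\zeta,\tau)+\alpha\,(B\tau)^{1/3}a_1\,u_1(\zeta,\tau)+\alpha^2(B\tau)^{2/3}a_2\,u_2(\zeta,\tau),\nonumber
\end{equation}
where $u_0,u_1,u_2$ are fixed real combinations of $y_{c4},y_{c5},y_{c6}$ (all at the common exponent $r$) and $a_0,a_1,a_2$ are constants. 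Since each of $y_{c4},y_{c5},y_{c6}$ and all of their $\zeta$--derivatives tend to $0$ as $\zeta\to\infty$ — which follows from the large--argument asymptotics of the $_1F_5$ functions building $v_1,\dots,v_6$ — the far--field conditions \eqref{bc:c3} hold automatically for any such ansatz, and once the constants are fixed the three surviving $\tau$--modes have orders $(B\tau)^{r}$, $(B\tau)^{r+1/3}$, $(B\tau)^{r+2/3}$; requiring $\lim_{\tau\to\infty}y_c=0$, i.e.\ condition \eqref{cl:dect}, therefore forces exactly $r<-2/3$.

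The engine of the computation is the following identity for the renormalized basis $\tilde v_i:=v_i\big/\big((i-1)!\,\Gamma(\tfrac{7-i}{6}+r)\big)$: a short manipulation of Pochhammer symbols gives $\tfrac{d}{dw}\tilde v_i(\,\cdot\,;r)=\tilde v_{i-1}(\,\cdot\,;r-\tfrac16)$ with indices read modulo $6$, so that $\zeta$--differentiation of a similarity profile cyclically permutes $\{\tilde v_i\}$ and lowers the exponent by $1/6$. Because $v_i^{(k)}(0)=(i-1)!\,\delta_{k,i-1}$, each odd derivative $\partial_\zeta^{2j+1}y_{c\ell}(0,\tau)$ picks off a single basis coefficient of $y_{c\ell}$, and one checks that
\begin{equation}
u_0=y_{c4}+y_{c5}-\sqrt3\,y_{c6},\qquad u_1=y_{c4}-2y_{c5},\qquad u_2=y_{c4}+y_{c5}+\sqrt3\,y_{c6}\nonumber
\end{equation}
are precisely the combinations whose only nonvanishing odd derivative at $\zeta=0$ is $\partial_\zeta$, $\partial_\zeta^3$ and $\partial_\zeta^5$, respectively. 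Substituting the ansatz into \eqref{bc:c1}--\eqref{bc:c2} then yields a \emph{triangular} system: the $\partial_\zeta$ of the $u_0$--mode must cancel against $\alpha$ times the $\partial_\zeta^3$ of the $u_1$--mode, and that in turn against $\alpha$ times the $\partial_\zeta^5$ of the $u_2$--mode; matching the attendant powers $(B\tau)^{r-1/6}$, $(B\tau)^{r-1/2}$, $(B\tau)^{r-5/6}$ pins down $a_0:a_1:a_2$ up to a single free multiplier $\gamma$, and the factors $\Gamma(\tfrac56+r)$, $\Gamma(\tfrac12+r)$, $\Gamma(\tfrac16+r)$ in the statement are exactly the normalizations that clear those $\tau$--powers and the stray $\Gamma$'s produced by $\tilde v_i^{(2j+1)}(0)$. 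One records $\gamma=O(\alpha)$ because, by Remark~\ref{cl:small}, the corner correction must be of the same (negligible) size as $y_0$ inside the corner layer.

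It remains to verify that the ansatz solves \eqref{eq:yinc}; this is the step I expect to be the main obstacle. Using the cyclic identity together with $\partial_\tau\big[(B\tau)^s g(\zeta/(B\tau)^{1/6})\big]=B(B\tau)^{s-1}\big(sg-\tfrac16 wg'\big)$, the verification collapses to a finite identity among the $\tilde v_i$; the delicate point is that the extra prefactors $(B\tau)^{j/3}$ shift the effective $\tau$--exponent relative to the one wired into the similarity ODE \eqref{fyc}, so the $\alpha$--graded residual of the leading operator $\partial_\tau-B\partial_\zeta^6$ has to be matched, order by order in $\alpha$, against the correction term $\alpha B\,\partial_\zeta^4 y_c$ of \eqref{eq:yinc} — and the cyclic rule, which makes $\partial_\zeta^4$ act as the shift $\tilde v_i(\,\cdot\,;r)\mapsto\tilde v_{i-4}(\,\cdot\,;r-\tfrac23)$, is exactly what lets the two residuals be aligned. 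Once this bookkeeping closes, nontriviality is immediate: for $r<-2/3$ none of $\Gamma(1+r),\Gamma(\tfrac56+r),\Gamma(\tfrac12+r),\Gamma(\tfrac16+r)$ sits at a pole, the profiles of $u_0,u_1,u_2$ are linearly independent in $\zeta$, and any $\gamma\neq0$ gives $y_c\not\equiv0$.
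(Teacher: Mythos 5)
Your algebra lands on the right object: regrouping the theorem's formula by powers of $\alpha(B\tau)^{1/3}$ gives exactly your $u_0=y_{c4}+y_{c5}-\sqrt{3}\,y_{c6}$, $u_1=y_{c4}-2y_{c5}$, $u_2=y_{c4}+y_{c5}+\sqrt{3}\,y_{c6}$, your triangular system is the paper's $3\times 3$ system for $c_4,c_5,c_6$ in disguise (the paper builds $y_{c1},\dots,y_{c6}$ via Laplace transform and solves that system directly), and your reasons for $r<-\tfrac23$ (condition \eqref{cl:dect}) and $\gamma=O(\alpha)$ (Remark \ref{cl:small}) are the paper's. The genuine gap is precisely the step you flag and leave open, and the repair you sketch cannot close it. First, the theorem concerns the reduced equation $\partial_\tau y_c=B\,\partial_\zeta^6 y_c$, so the term $\alpha B\,\partial_\zeta^4 y_c$ of the full corner-layer equation \eqref{eq:yinc} is not available to absorb a residual. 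Second, even against the full equation the order-$\alpha$ cancellation fails: writing $u_1=(B\tau)^r W_1(w)$ with $W_1$ solving \eqref{fyc} at exponent $r$, one computes
\begin{equation}
\Bigl(\frac{\partial}{\partial\tau}-B\frac{\partial^6}{\partial\zeta^6}\Bigr)\bigl[\alpha(B\tau)^{1/3}a_1u_1\bigr]
=\tfrac{1}{3}\,a_1\,\alpha B\,(B\tau)^{r-2/3}\,W_1(w),\nonumber
\end{equation}
while $\alpha B\,\partial_\zeta^4\bigl[a_0u_0\bigr]=a_0\,\alpha B\,(B\tau)^{r-2/3}\,W_0^{(4)}(w)$, so closure would require $W_0^{(4)}\propto W_1$. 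But $W_1$ satisfies the similarity ODE with parameter $r$, whereas $W_0^{(4)}$ satisfies it with parameter $r-\tfrac23$; a nonzero function satisfying both would satisfy their difference, $\tfrac23 W=0$, hence vanish identically. So no nontrivial choice of $a_0,a_1$ aligns the two residuals, and the "bookkeeping" you defer cannot be made to close exactly.

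What the paper does at this point is different and more modest: it never multiplies fixed-exponent profiles by $(B\tau)^{1/3}$ and then verifies the PDE. It imposes the single-exponent similarity form \eqref{yc:selfsim}, converts \eqref{bc:c1}--\eqref{bc:c3} into \eqref{hyp:bc1}--\eqref{hyp:bc2}, and solves the resulting linear system, accepting the $\tau$-dependent ($O(\alpha)$ and $O(\alpha^2)$) coefficients as part of the formal asymptotic construction rather than claiming an exact order-by-order solution of the equation. To complete your write-up you must either adopt that stance explicitly (the $O(\alpha)$ and $O(\alpha^2)$ pieces satisfy the equation only up to errors of the size already discarded in the corner-layer reduction) or drop the claim of exact verification; the identity you were hoping the cyclic rule would supply is false. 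A secondary gap: you assert that the decay in \eqref{bc:c3} ``follows from the large-argument asymptotics of the $_1F_5$'' building $v_1,\dots,v_6$; each individual $v_i$ in fact blows up as $\zeta\rightarrow\infty$, and the decay of the particular combinations $y_{c4},y_{c5},y_{c6}$ and their derivatives is a nontrivial technical point that the paper delegates to the Fourier-cosine and integration-by-parts argument of \cite{Kalantarova2019}, so it needs more than a one-line appeal.
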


Observe that
\begin{multline}
\frac{\partial^{2}y_{c}}{\partial x^{2}}(0,t)=(\sqrt{3}-1)\gamma\frac{(Bt)^{r+\frac{1}{3}}\Gamma(r+\frac{1}{6})}{6\alpha^{5r+\frac{5}{3}}\Gamma(r+\frac{2}{3})}\\
-\frac{2}{3}\gamma\frac{(Bt)^{r}\Gamma(r+\frac{1}{2})}{\alpha^{5r+1}\Gamma(r+\frac{2}{3})}-(1+\sqrt{3})\gamma\frac{(Bt)^{r-\frac{1}{3}}\Gamma(r+\frac{5}{6})}{6\alpha^{5r+\frac{1}{3}}\Gamma(r+\frac{2}{3})}
\end{multline}
is negligible outside the corner layer.

Moreover, for each fixed time $t$ the integral of the composite expansion $y(x,t;\alpha)$ with respect to $x$ over $(0,\infty)$ is of negligible magnitude as expected due to the conservation of matter.
 
\section{Discussion}\label{sec:disc}

 In order to estimate the effect of the elastic coating on the GB groove profile for the experiment-relevant situation, we estimate parameter $\alpha$ for the thin $(h=5 nm)$ layer of alumina (native oxide) on the surface of metallic Al. Since there is a high uncertainty in elastic properties of amorphous alumina, we will employ the elastic constants of $\gamma-Al_{2}O_{3}$ (a product of crystallization of amorphous alumina once the thickness of the oxide layer exceeds some critical value): $E=253$ GPa and $\nu=0.24$, \cite{Gallas1994}. We will approximate the surface stress of $\gamma-$alumina by its surface energy, $\gamma_{s}=1.67 J/m^{2}$ \cite{Mchale1997}. Finally, the value of Al-alumina interface energy will be approximated by the value of the energy of the respective solid (sapphire)-liquid (Al) interface: $\gamma_{i}=1.2 J/m^{2}$ \cite{Levi2003}. With these values, $\alpha\approx 9.7\times 10^{-16} m^{2}$. 
\begin{remark}
According to the numerical study by Robertson \cite{Robertson1971} of nonlinear Mullins' equation the value of the normalized grain boundary groove profile at the origin (plotted in Fig.5 (b) as $-y_{0}/m(Bt)^{1/4}$) is $0.78$ for Mullins' linearized problem (1.7)-(1.9) and $0.77$ for Mullins' nonlinear problem. Thus, the error introduced by linearization is $1.3\%$ for the value of $m$ we consider. On the other hand, the effect of the elastic layer for the longest annealing time is $12.5\%$, which is much larger than the effect of linearization. Hence, ignoring the relatively small error due to linearization does not affect the validity of our asymptotic analysis.
\end{remark}
\begin{figure}[h!]
  \includegraphics[width=.8\textwidth]{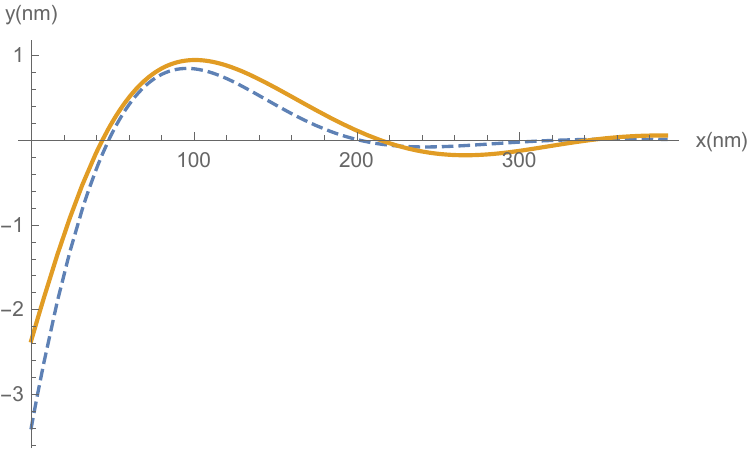}
 \caption{\footnotesize{The perturbation solution (thick line) is plotted together with Mullins' solution (dashed) for  $m=0.209$, $Bt=3\times10^{-30}$$m^{4}$ and $\alpha=9.7\times 10^{-16}$$m^{2}$}}
 \label{fig:t03}
 \end{figure}
\begin{figure}[h!]
 \includegraphics[width=.8\textwidth]{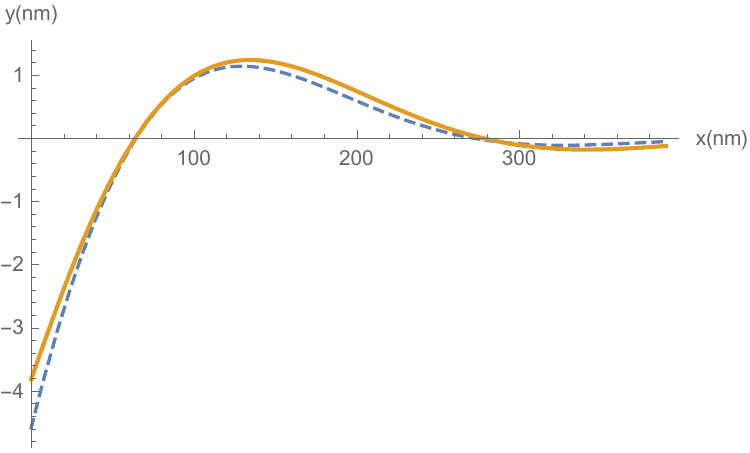}
 \caption{\footnotesize{The perturbation solution (thick line) is plotted together with Mullins' solution (dashed) for  $m=0.209$, $Bt=10^{-29}$$m^{4}$ and $\alpha=9.7\times 10^{-16}$$m^{2}$}}
 \label{fig:t05}
\end{figure}
  
\begin{figure}[h]
  \includegraphics[width=.9\textwidth]{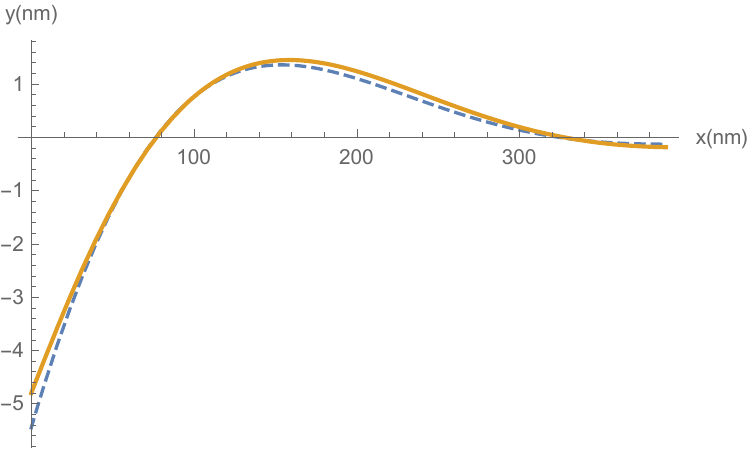}
 \caption{\footnotesize{The perturbation solution (thick line) is plotted together with Mullins' solution (dashed) for  $m=0.209$, $Bt=2\times10^{-29}$$m^{4}$ and $\alpha=9.7\times 10^{-16}$$m^{2}$}}
 \label{fig:t2}
\end{figure}

\begin{figure}[h!]
    \centering
    \begin{subfigure}[b]{0.45\textwidth}
        \includegraphics[width=\textwidth]{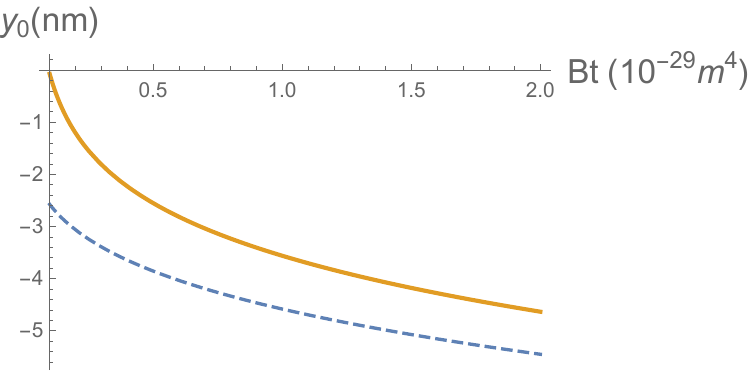}
        \caption{$\alpha=10.5\times10^{-16}$$m^{2}$}
        \label{fig:depth1}
    \end{subfigure} 
    \begin{subfigure}[b]{0.45\textwidth}
        \includegraphics[width=\textwidth]{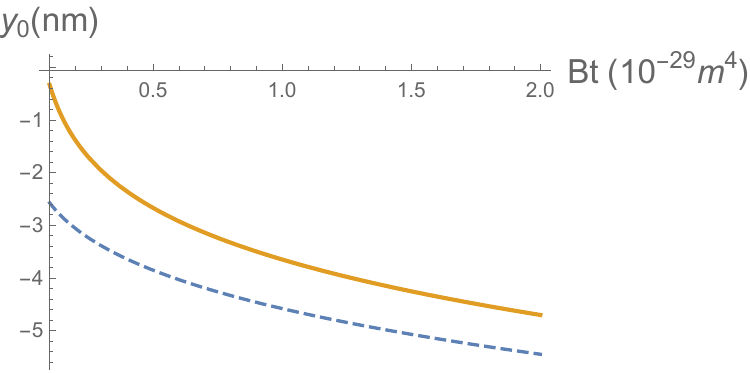}
        \caption{$\alpha=9.7\times10^{-16}$$m^{2}$}
        \label{fig:depth2}
    \end{subfigure}
       \begin{subfigure}[b]{0.45\textwidth}
        \includegraphics[width=\textwidth]{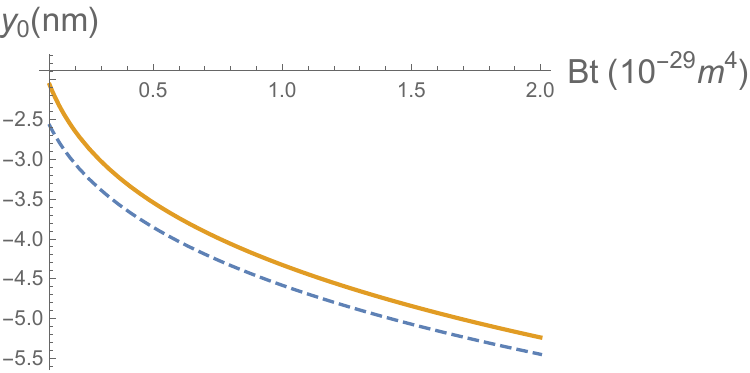}
        \caption{$\alpha=3\times10^{-16}$$m^{2}$}
        \label{fig:depth3}
    \end{subfigure}
       \begin{subfigure}[b]{0.45\textwidth}
        \includegraphics[width=\textwidth]{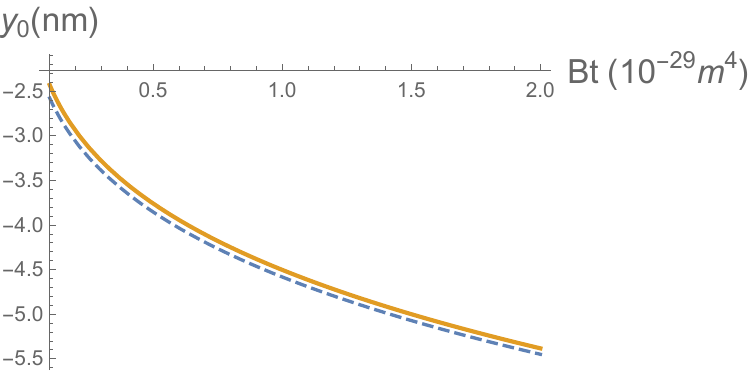}
        \caption{$\alpha=9.7\times10^{-17}$$m^{2}$}
        \label{fig:depth4}
    \end{subfigure}
    \caption{\footnotesize{The time evolution of groove depth $y_{0}(t):=y(0,t)$ of the perturbation solution (thick line) and Mullins' solution(dashed) are plotted for $m=0.209$} and different values of $\alpha$}\label{fig:d1d2d3}
\end{figure}
 
 Figs. \ref{fig:t03}-\ref{fig:t2} illustrate that the composite expansion $y(x,t;\alpha)$ defined in \eqref{y:comp} is a good approximation to the solution of our problem. One can see from these plots that the effect of elastic coating on the groove profile decreases with increasing annealing time and widening of the groove. Indeed, for the groove of 158.6 nm in half-width (see Fig.~\ref{fig:t2}) the composite expansion is basically indistinguishable from the original Mullins solution. For shorter times/narrower grooves, the effect of elastic coating is highest at the groove root, $x=0$ (see Figs. \ref{fig:t03}-\ref{fig:t2}), which is given by
\begin{multline}
y(0,t;\alpha)-y_{0}(0,t)=\sum_{r=1}^{2}(-\alpha)^{r}\frac{m \Gamma \left(\frac{3 r}{2}-\frac{1}{4}\right) }{4 \pi (B t)^{\frac{r}{2}-\frac{1}{4}} r!}\\
+\alpha\frac{ m}{2\sqrt{2}(Bt)^{1/4}\Gamma(\frac{3}{4})}-\alpha^{2}\frac{m \Gamma \left(\frac{7}{4}\right)}{4 \pi  (B t)^{3/4}}+O(\alpha^{3}).
\nonumber
\end{multline} 
This is understandable, since the elastic coating tries to “flatten” the surface profile. At the same time, the amplitudes of the primary maximum and a secondary minimum of the profile increase.  

Generally, for the estimated value of parameter $\alpha$ for thin passivating alumina film on the surface of metallic Al the difference between the original Mullins solution for unpassivated surface, and the solution derived in the present work with the aid of singular perturbation theory is rather small. This justifies in the hindsight the use of perturbation theory in handling of the problem in the cases of practical relevance. It does not mean, however, that the actual effect of passivating layer is always small. As mentioned above, very thick and stiff passivation layer can completely stop the GB grooving process. However, the perturbation theory cannot be employed for handling of such thick and stiff layers. Also, while our theory indicates that the difference of the shapes of the GB groove for unpassivated and passivated surfaces is small, it does not mean that the effect of passivation layer is small in absolute terms. Indeed, the parameter $B$ in equation \eqref{a1} scales with the metal-self-diffusion coefficient along the metal-coating interface, $D_{i}$, whereas in the classical Mullins model for unpassivated surfaces this parameter scales with the surface self-diffusion coefficient of the metal, $D_{s}$. Though several recent works indicate that self-diffusion of metal along the metal-oxide interface is much faster than bulk self-diffusion \cite{Barda2019}, \cite{Hieke2017}, \cite{Kosinova2018}, \cite{Kumar2018}, it is still much slower than self-diffusion of metal on unpassivated surface. At homologically low temperatures the difference between $D_{i}$ and $D_{s}$ can reach several orders of magnitude \cite{Barda2020}. Therefore, while the original Mullins model is a good approximation for describing the shape of the passivated GB grooves, the linear dimensions of such grooves may be significantly smaller than those of the grooves formed at unpassivated surface of the same material and the same annealing time.

Another interesting observation is that the coordinate $x_{m}$ of the primary groove profile maximum is only insignificantly affected by the presence of the coating. Since the value $x_{m}$ is employed for determining the surface self-diffusion coefficient from the profiles of GB grooves, one can conclude that it can be also employed for determining the self-diffusion coefficient of metal, $D_{i}$, along the metal-coating interface. 

\section{Conclusion}
In summary, we considered the problem of GB grooving in the initially planar metal bicrystal with inert elastic passivation coating. The GB-driven evolution of surface profile causes elastic bending of the coating, which in turn affects the driving force for interface diffusion determining the kinetics of the groove growth. We formulated the sixth-order linear partial differential equation describing the evolution of surface profile, and employed the variational method to determine the appropriate boundary conditions. Analysis of the problem in the framework of singular perturbation theory yielded an asymptotic solution converging to Mullins’ solution for unpassivated surfaces for long annealing times. For short annealing times the depth of the groove formed in passivated bicrystal is shallower than in its unpassivated counterpart. Also, we demonstrated that the original Mullins’ analysis can be employed for determining the self-diffusion coefficient of metal along the metal-coating interface.  \vskip6pt

\appendix
\section{Derivation of the Corner Layer Correction Term}
\begin{proof}[Proof of Theorem \ref{thm:cle}]
Guided by the fact that the equation \eqref{eq:yinc} has the following scaling symmetry, namely, given any solution $y_{c}(\zeta,\tau)$ to \eqref{eq:yinc},
\begin{equation}
y_{c\lambda}=\lambda^{r}y_{c}(\lambda\zeta,\lambda^{6}\tau)\quad\mbox{for any}\ \lambda>0,\ r\in\mathbb{R},
\end{equation}
is also a solution of \eqref{eq:yinc}, we seek $y_{c}(\zeta,\tau)$ in the form
\begin{equation}\label{yc:selfsim}
y_{c}(\zeta,\tau)=(B\tau)^{r}V(\zeta/(B\tau)^{1/6}),
\end{equation}
where $V(w)$ satisfies
\begin{equation}\label{eq:hyp}
V^{(6)}(w)+\frac{1}{6}wV'(w)-rV(w)=0,\quad w=\zeta/(B\tau)^{1/6},
\end{equation}
and
\begin{eqnarray}
\label{hyp:bc1}&&\alpha^{2}(B\tau)^{2/3}V'(0)=\alpha(B\tau)^{1/3}V'''(0)=V^{(5)}(0),\\
\label{hyp:bc2}&&\lim_{\zeta\rightarrow\infty}(B\tau)^{r}V(\zeta/(B\tau)^{1/6})=0,\quad \tau>0,
\end{eqnarray}
as in \cite{Mullins1957}. The equation \eqref{eq:hyp} is obtained by substituting \eqref{yc:selfsim} into \eqref{eq:yinc} and the boundary conditions \eqref{hyp:bc1}-\eqref{hyp:bc2} are obtained by substituting \eqref{yc:selfsim} into \eqref{bc:c1}-\eqref{bc:c3}.

Introducing a new variable $u$ such that 
\begin{equation}
w(u)=\frac{1}{6}u^{1/6},\nonumber
\end{equation}
in \eqref{eq:hyp} transforms it into a generalized hypergeometric differential equation [GHDE], which has a well-established theory, \cite{NIST}. Once transformed into GHDE solving the resulting equation is technical but straightforward. The calculations are essentially identical with that of corresponding result in \cite[Appendix A]{Kalantarova2019} and therefore will be omitted. The fundamental set of solutions of \eqref{eq:hyp} is given by
\begin{eqnarray}
\label{v1}&&v_{1}(w)\!=\, _1F_5\left(-r;\frac{1}{6},\frac{1}{3},\frac{1}{2},\frac{2}{3},\frac{5}{6};-\frac{w^6}{6^{6}}\right),\\
\label{v2}&&v_{2}(w)=w\, _1F_5\left(\frac{1}{6}-r;\frac{1}{3},\frac{1}{2},\frac{2}{3},\frac{5}{6},\frac{7}{6};-\frac{w^6}{6^{6}}\right),\\
\label{v3}&&v_{3}(w)\!=w^2 \, _1F_5\left(\frac{1}{3}-r;\frac{1}{2},\frac{2}{3},\frac{5}{6},\frac{7}{6},\frac{4}{3};-\frac{w^{6}}{6^{6}}\right),
\end{eqnarray}

\begin{eqnarray}
\label{v4}&&v_{4}(w)=w^3 \, _1F_5\left(\frac{1}{2}-r;\frac{2}{3},\frac{5}{6},\frac{7}{6},\frac{4}{3},\frac{3}{2};-\frac{w^6}{6^{6}}\right),\\
\label{v5}&&v_{5}(w)\!=w^4 \, _1F_5\left(\frac{2}{3}-r;\frac{5}{6},\frac{7}{6},\frac{4}{3},\frac{3}{2},\frac{5}{3};-\frac{w^6}{6^{6}}\right),\\
\label{v6}&&v_{6}(w)=w^5 \, _1F_5\left(\frac{5}{6}-r;\frac{7}{6},\frac{4}{3},\frac{3}{2},\frac{5}{3},\frac{11}{6};-\frac{w^6}{6^{6}}\right),
\end{eqnarray}
and hence its general solution is
\begin{equation}\label{sol:hyp}
V(w)=\sum_{i=1}^{6}C_{i}v_{i}(w),\quad w=\frac{\zeta}{(B\tau)^{1/6}},\quad C_{i}\in\mathbb{R}.
\end{equation}

Thus it follows from \eqref{sol:hyp} and \eqref{yc:selfsim} that, if $y_{c}(\zeta,\tau)$ is a self-similar solution to \eqref{eq:yinc}, then it can be expressed as
\begin{equation}\label{sol:gss}
y_{c}(\zeta,\tau)=(B\tau)^{r}\sum_{i=1}^{6}C_{i}v_{i}(w),
\end{equation}
where $\omega=\zeta/(B\tau)^{1/6}$, and where $C_{i}$, $i=1,\ldots, 6$ are arbitrary constants and $v_{i}(w)$ are linearly independent entire functions. Moreover,
\begin{equation}
\lim_{\zeta\rightarrow\infty}\lvert v_{i}(\zeta/(B\tau)^{1/6})\rvert=\infty\quad\mbox{for}\quad\tau>0.\nonumber
\end{equation}

In order to determine the asymptotically decaying solutions of \eqref{eq:yinc} that have the form \eqref{yc:selfsim}, we use Laplace transform method as in \cite{Mullins1957}, \cite{Kalantarova2019}. We take the Laplace transform of \eqref{eq:yinc} with respect to the variable $\tau$
\begin{equation}\label{eq:Lap}
p\overline{y}_{c}-B\frac{\partial^{6}\overline{y}_{c}}{\partial\zeta^{6}}=0,
\end{equation}
where
\begin{equation}
\overline{y}_{c}(\zeta,p)=\int_{0}^{\infty}e^{-p\tau}y_{c}(\zeta,\tau)d\tau.\nonumber
\end{equation}

Recalling $y_{c}$ is of the form \eqref{yc:selfsim} it follows that the ODE \eqref{eq:Lap} has a set of 6 fundamental solutions given by
\begin{eqnarray}
\label{lyc1}&&\overline{y}_{c1}(\zeta,p)=B^{r}p^{-(1+r)}\exp\left(\frac{p^{1/6}}{B^{1/6}}\zeta\right),\\
\label{lyc2}&&\overline{y}_{c2}(\zeta,p)=B^{r}p^{-(1+r)}\exp\left(\frac{p^{1/6}}{2B^{1/6}}\zeta\right)\cos\left(\frac{\sqrt{3}p^{1/6}}{2B^{1/6}}\zeta\right),\\
\label{lyc3}&&\overline{y}_{c3}(\zeta,p)=B^{r}p^{-(1+r)}\exp\left(\frac{p^{1/6}}{2B^{1/6}}\zeta\right)\sin\left(\frac{\sqrt{3}p^{1/6}}{2B^{1/6}}\zeta\right),\\
\label{lyc4}&&\overline{y}_{c4}(\zeta,p)=B^{r}p^{-(1+r)}\exp\left(-\frac{p^{1/6}}{B^{1/6}}\zeta\right),\\
\label{lyc5}&&\overline{y}_{c5}(\zeta,p)=B^{r}p^{-(1+r)}\exp\left(-\frac{p^{1/6}}{2B^{1/6}}\zeta\right)\cos\left(\frac{\sqrt{3}p^{1/6}}{2B^{1/6}}\zeta\right),\\
\label{lyc6}&&\overline{y}_{c6}(\zeta,p)=B^{r}p^{-(1+r)}\exp\left(-\frac{p^{1/6}}{2B^{1/6}}\zeta\right)\sin\left(\frac{\sqrt{3}p^{1/6}}{2B^{1/6}}\zeta\right).
\end{eqnarray}
We attempt to calculate the inverse Laplace transform of each $\overline{y}_{ci}(\zeta,p)$ for $i=1,\dots,6$, by using the fact that as a self-similar solution to \eqref{eq:yinc}, which is of the form \eqref{yc:selfsim}, each $y_{ci}(\zeta,\tau)$ may be expressed as \eqref{sol:gss}. Then we have
\begin{equation}\label{d0}
\frac{\partial^{(i-1)}y_{cj}}{\partial\zeta^{(i-1)}}(0,\tau)=(B\tau)^{r+(1-i)/6}C_{i}(i-1)!,\quad i,j=1,2,\ldots,6.
\end{equation}
Taking the Laplace transform of \eqref{d0}, we get
\begin{equation}\label{ld0}
\frac{\partial^{(i-1)}\overline{y}_{cj}}{\partial\zeta^{(i-1)}}(0,p)=C_{i}(i-1)!B^{\frac{1-i}{6}+r}p^{\frac{1}{6}(-7+i-6r)}\Gamma\left(\frac{7}{6}-\frac{i}{6}+r\right).
\end{equation}
Note that the left hand side of \eqref{ld0} can be directly calculated from \eqref{lyc1}-\eqref{lyc6}, which combined with \eqref{d0} gives us

\begin{equation}
\begin{bmatrix}
y_{c1}(\zeta,\tau)\\[0.3em]
y_{c2}(\zeta,\tau)\\[0.3em]
y_{c3}(\zeta,\tau)\\[0.3em]
y_{c4}(\zeta,\tau)\\[0.3em]
y_{c5}(\zeta,\tau)\\[0.3em]
y_{c6}(\zeta,\tau)
\end{bmatrix}
=(B\tau)^{r}\begin{bmatrix}
 1   &  1                        &  1                         & 1  & 1                           &  1 \\[0.5em]
 1   & \frac{1}{2}          &-\frac{1}{2}           & -1  &-\frac{1}{2}           &\frac{1}{2}  \\[0.5em]
 0   &\frac{\sqrt{3}}{2} &  \frac{\sqrt{3}}{2}&  0  &-\frac{\sqrt{3}}{2}  &-\frac{\sqrt{3}}{2}   \\[0.5em]
 1   & -1                        &  1                         &-1  & 1                           & -1 \\[0.5em]
 1   &-\frac{1}{2}          &-\frac{1}{2}           &  1  &-\frac{1}{2}           &-\frac{1}{2}  \\[0.5em]
 0   &\frac{\sqrt{3}}{2} &-\frac{\sqrt{3}}{2} &  0  &\frac{\sqrt{3}}{2}   &-\frac{\sqrt{3}}{2}   
\end{bmatrix}
\begin{bmatrix}
 \frac{1}{0!\Gamma(1+r)}v_{1}(w)\\[0.3em]
 \frac{1}{1!\Gamma\left(\frac{5}{6}+r\right)} v_{2}(w)\\[0.7em]
 \frac{1}{2!\Gamma\left(\frac{4}{6}+r\right)}v_{3}(w)\\[0.7em]
 \frac{1}{3!\Gamma\left(\frac{1}{2}+r\right)}v_{4}(w)\\[0.7em]
 \frac{1}{4!\Gamma\left(\frac{1}{3}+r\right)}v_{5}(w)\\[0.7em]
 \frac{1}{5!\Gamma\left(\frac{1}{6}+r\right)}v_{6}(w)
\end{bmatrix}.\nonumber
\end{equation}

$y_{ci}(\zeta,\tau)$ for $i=1,2,3$ oscillate with a growing amplitude. The proof of this asymptotic behavior  (illustrated in Figure \ref{fig:yc1yc2}(A), (B), Figure \ref{fig:yc3yc4} (A)) follows directly from their definitions, and the proof of asymptotic decay of $y_{ci}(\zeta,\tau)$ for $i=4,5,6$ (illustrated in Figure \ref{fig:yc3yc4} (B), Figure \ref{fig:yc5yc6} (A), (B)) is technical and utilizes Fourier cosine transforms method and repeated application of integration by parts, \cite{Martin2009}, \cite[Appendix B]{Kalantarova2019}.

\begin{figure}[ht]
    \centering
    \begin{subfigure}[b]{0.49\textwidth}
        \includegraphics[width=\textwidth]{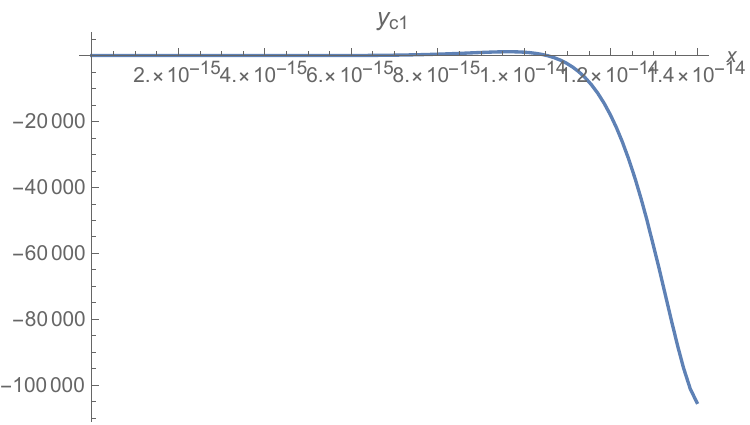}
        \caption{}
        \label{fig:yc1}
    \end{subfigure}
    \begin{subfigure}[b]{0.49\textwidth}
        \includegraphics[width=\textwidth]{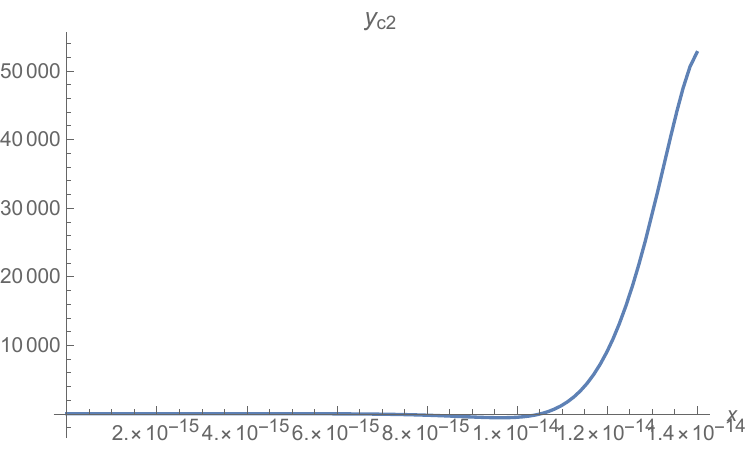}
        \caption{}
        \label{fig:yc2}
    \end{subfigure}
    \caption{\footnotesize{The plots are done for $r=-1$, $Bt=\alpha^{5}=8.587\times10^{-76}$.}}\label{fig:yc1yc2}
\end{figure}

\begin{figure}[ht]
    \centering
    \begin{subfigure}[b]{0.49\textwidth}
        \includegraphics[width=\textwidth]{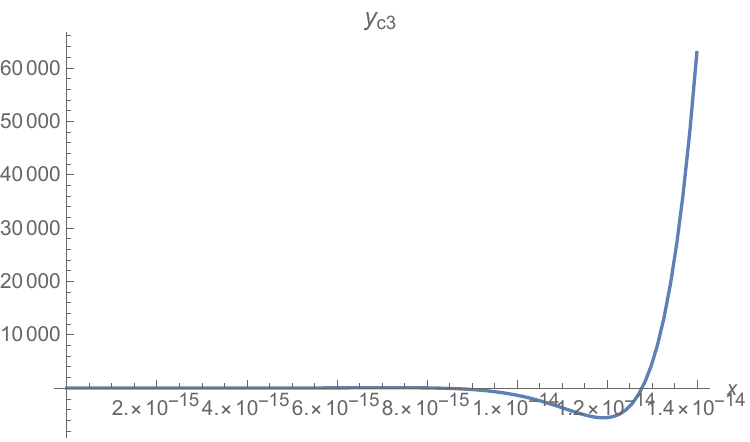}
        \caption{}
        \label{fig:yc3}
    \end{subfigure}
    \begin{subfigure}[b]{0.49\textwidth}
        \includegraphics[width=\textwidth]{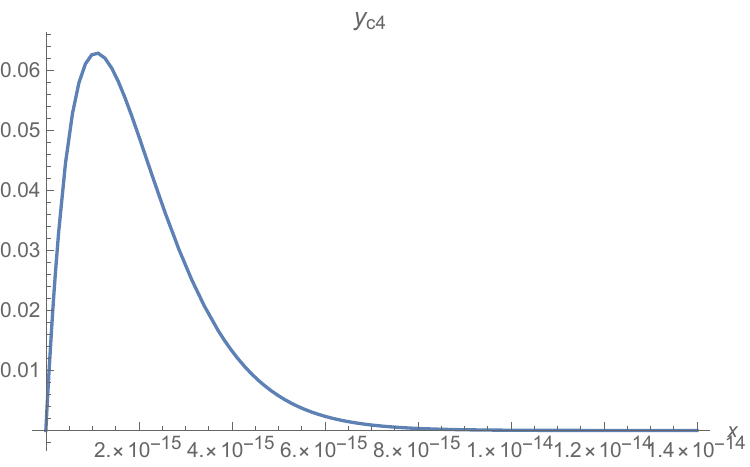}
        \caption{}
        \label{fig:yc4}
    \end{subfigure}
    \caption{\footnotesize{The plots are done for $r=-1$, $Bt=\alpha^{5}=8.587\times10^{-76}$.}}\label{fig:yc3yc4}
    \begin{subfigure}[b]{0.49\textwidth}
        \includegraphics[width=\textwidth]{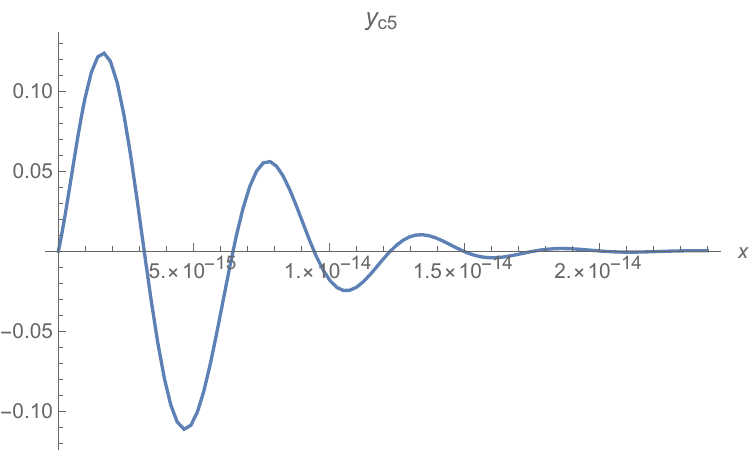}
        \caption{}
        \label{fig:yc5}
    \end{subfigure}
    \begin{subfigure}[b]{0.49\textwidth}
        \includegraphics[width=\textwidth]{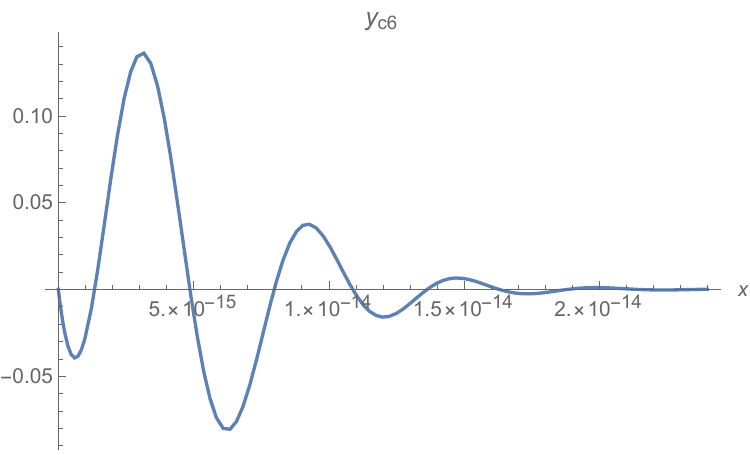}
        \caption{}
        \label{fig:yc6}
    \end{subfigure}
    \caption{\footnotesize{The plots are done for $r=-1$, $Bt=\alpha^{5}=8.587\times10^{-76}$.}}\label{fig:yc5yc6}
\end{figure}

Thus a solution of \eqref{eq:yinc} of the form \eqref{yc:selfsim} that satisfy \eqref{hyp:bc1}-\eqref{hyp:bc2} can be written as
\begin{equation}
y_{c}(\zeta,\tau)=\sum_{i=1}^{6}c_{i}y_{i}(\zeta,\tau),
\end{equation}
where $c_{1}=c_{2}=c_{3}=0$ and $c_{4}$, $c_{5}$, $c_{6}$ satisfy
\begin{eqnarray}
&&-c_{4}-\frac{1}{2}c_{5}+\frac{\sqrt{3}}{2}c_{6}=V'(0)\Gamma\left(\frac{5}{6}+r\right),\nonumber\\
&&-c_{4}+c_{5}=\alpha(B\tau)^{1/3}V'(0)\Gamma\left(\frac{1}{2}+r\right),\nonumber\\
&&-c_{4}-\frac{1}{2}c_{5}-\frac{\sqrt{3}}{2}c_{6}=\alpha^{2}(B\tau)^{2/3}V'(0)\Gamma\left(\frac{1}{6}+r\right).\nonumber
\end{eqnarray}
Solving the above algebraic system for $c_{4}$, $c_{5}$, $c_{6}$ gives us the desired result

\begin{multline}\label{sol:yc}
y_{c}(\zeta,\tau)=-\frac{V'(0)}{\sqrt{3}}\left[\alpha^{2}(B\tau)^{\frac{2}{3}}\Gamma\left(\!\frac{1}{6}+r\!\right)-\Gamma\left(\!\frac{5}{6}+r\!\right)\right]y_{c6}(\zeta,\tau)\\
-\frac{V'(0)}{3}\!\left[\alpha^{2}(B\tau)^{\frac{2}{3}}\Gamma\left(\!\frac{1}{6}+r\!\right)-2\alpha(B\tau)^{\frac{1}{3}}\Gamma\left(\!\frac{1}{2}+r\!\right)+\Gamma\left(\!\frac{5}{6}+r\!\right)\right]y_{c5}(\zeta,\tau)\\
-\frac{V'(0)}{3}\!\left[\alpha^{2}(B\tau)^{\frac{2}{3}}\Gamma\left(\!\frac{1}{6}+r\!\right)+\alpha(B\tau)^{\frac{1}{3}}\Gamma\left(\!\frac{1}{2}+r\!\right)+\Gamma\left(\!\frac{5}{6}+r\!\right)\right]y_{c4}(\zeta,\tau).
\end{multline}

Note that in case of $r=-\frac{1}{6}$, integrating \eqref{eq:hyp} term by term implies 
\begin{equation}
V'(0)=V'''(0)=V^{(5)}(0)=0,
\end{equation}
which results in the trivial solution
\begin{equation}
y_{c}(\zeta,\tau)\equiv0.\nonumber
\end{equation}
Moreover, $r<-\frac{2}{3}$ since $y_{c}$ must satisfy \eqref{cl:dect}, i.e., it must be exponentially small outside the corner layer.

Furthermore, it follows from Remark \ref{cl:small} that $V'(0)=O(\alpha)$.

\end{proof}

\section*{Acknowledgments}
The first author would like to thank Dr.~Orestis Vantzos for bringing the method of singular perturbation to their attention and for helpful discussions.

This version of the article has been accepted for publication, after peer review and is subject to Springer Nature’s AM terms of use, but is not the Version of Record and does not reflect post-acceptance improvements, or any corrections. The Version of Record is available online at: https://doi.org/10.1007/s00161-021-01040-0


\clearpage
    
\bibliography{bibl_hvk}
\bibliographystyle{amsplain}

\end{document}